\documentclass[11pt]{article}

\usepackage{amsthm,amssymb,amsfonts,amsmath}
\usepackage{amscd} 
\usepackage{mathrsfs}
\usepackage{biblatex}
\usepackage{graphicx}

\usepackage{color}

\providecommand{\Z}{}

\providecommand{\C}{}

\renewcommand{\Z}{\mathbb{Z}}

\renewcommand{\C}{\mathbb{C}}

\newcommand{\ad}{\mathrm{ad}}

\providecommand{\ora}[1]{}
\renewcommand{\ora}[1]{\overrightarrow{#1}}

\newtheorem{thm}{Theorem}
\newtheorem{lem}[thm]{Lemma}
\newtheorem{prop}[thm]{Proposition}

\newtheorem{dfn}[thm]{Definition}

\newtheorem{fact}[thm]{Fact}

\numberwithin{thm}{section}
\numberwithin{equation}{section}

\newtheorem{prob}{Problem}
\usepackage{graphicx}
\usepackage{algpseudocode}
\usepackage{amssymb}
\usepackage{hyperref}
\usepackage{tikz}

\usepackage{tikz-cd}
\usetikzlibrary{shapes.geometric, automata, arrows.meta}

\begin{document}
\author{Jacob Canel\\ Department of Mathematics, Pennsylvania State University, \\ \href{mailto:jmc8684@psu.edu}{jmc8684@psu.edu}}
\date{\today}

\title{Diagrams for Roots and Quiver Representations in the context of the Gabriel-Kac Theorem}
\maketitle
\begin{abstract}
    Using the data of the Dynkin diagrams, we build graphical calculi for representing the positive roots of Kac-Moody algebras and the representations of quivers. Once developed, we use these to analyze the asymptotic size of the root and representation spaces, and discuss how this relates to the Gabriel-Kac theorem, the correspondence between these two structures. We give a refinement of this correspondence in terms of the trichotomy of growth rates for Kac-Moody algebras depending on if they are of finite, affine, or indefinite type, which is done in terms of what we call ``tree like" representations.
\end{abstract}
\tableofcontents
\section{Introduction}
There is an intimate relationship between the Kac-Moody algebra defined on a simply-laced graph and the representation theory of any quiver produced by picking an orientation on that graph. In particular, Gabriel \cite{Gabriel_1972} showed that roots of an $ADE$ Lie algebra correspond $1-1$ with representations of such a quiver, and Kac \cite{KacRoots} showed that existence of a root corresponded to existence of an indecomposible quiver representation in the case of arbitrary Kac-Moody algebras. Nakajima developed this line of reasoning further by using homological constructions on his quiver varieties to produce \textit{representations} of the Kac-Moody algebra. In the same paper where he demonstrated his part of the correspondence between roots and representations (which we will call the Gabriel-Kac theorem), Kac conjectured that the constant part of the polynomial count of absolutely indecomposable quiver representations was the multiplicity of the associated root, which was eventually proven by Hausel \cite{Hausel2010} using Nakajima's quiver varieties.

We add to this discussion by presenting a graphical calculi suitable to representing both roots and quiver representations, which differ minimally, and use them to illustrate this relationship more intuitively. We introduce a language for generating Kac-Moody roots and use it to give a simple proof of the growth-rate trichotomy for Kac-Moody algebras \cite{Kac_2003}. This language is defined entirely in terms of the Dynkin diagrams, and reflect their combinatorial structure. We further introduce a similar language for generating quiver representations (over $\C$), and discuss the families of representations thus generated which naturally correspond to the roots, as in Kac's conjectures. These languages are given solely from the data of the Dynkin diagrams, which is the conceptual contribution of this work.

In \S\ref{sec:Background}, we build the visual calculi for both sets of objects in the correspondence. In \S\ref{sec:Simple}, \S\ref{sec:Affine}, and \S\ref{sec:WildBinary}, we discuss the diagrammatics and asymptotic behavior of simple, affine, and wild-type Kac-Moody algebras respectively. In \S\ref{sec:finitequiver}, \S\ref{sec:affinequiver}, and \S\ref{sec:WildBinary}, we discuss the diagrams and asymptotic behavior of quiver representations for the finite, affine, and wild-type quivers respectively. Finally, we make some concluding remarks in \S\ref{sec:conc}.

\section{Defining the Graphical Calculi}\label{sec:Background}

\subsection{Building the Graphical Calculus for Kac-Moody Roots}

For background on constructing Lie algebras from Dynkin diagrams, please consult the appendix \ref{sec:farbackground}. Every element of $E$ (the positive part of the Lie algebra) can be written as an $\mathrm{ad}$-polynomial, meaning a sum of terms like $[e_{i_n},[e_{i_{n-1}},[e_{i_{n-2}},[...,e_{i_1}]...]]]]$. We can simplify this by just writing their subscripts in sequence (in reverse order $i_1,i_2... i_n$) or by representing them using Dynkin diagram vertices, which is what we will do. If two generators have commuting labels, we can just as well write them one on top of the other, since we can move one past the other freely. If two generators in adjacent positions don't commute, there is at least one edge between them in the Dynkin graph. An edge should be drawn between them in our diagram to indicate this.

We will therefore write elements of the Lie algebra as graphs sorted into levels. Since $\ad$-polynomials have some initial vector upon which repeated $\ad$-actions are taken, there is only one vertex in the initial level (which we write to the left). We can also move every vertex in the graph towards the left until it reaches something with which it does not commute, so we can assert that every vertex in every other level must have a line to something in the previous level. 

We will call the $a$-valence of a set of vertices the sum of the number of directed edges that go from those vertices (counted with multiplicity) to the vertex $a$ in the Dynkin diagram and counting edges labeled $(x,y)$ with the $x$ towards the $a$ as $a$ directed edges moving towards $a$. If the number of $a$-vertices in level $n+1$ exceeds the $a$-valence of level $n$, it is possible to move at least one $a$ vertex to level $n-1$, so we will insist on disallowing such arrangements. These rules do not provide a complete description of a basis for $E$, as not all relations are applied when they could be, but it does reduce the search space greatly, especially in the semisimple and near semisimple cases.

The sections \ref{sec:Simple}, \ref{sec:Affine}, and \ref{sec:WildBinary} are devoted to following a method which proceeds thusly:
\begin{enumerate}
  
    \item Iteratively create new candidate elements of $E$ using the results above
    \item Apply the relations to reduce the number of candidate elements
    \item Describe the resulting long-time behavior
\end{enumerate}

The resulting diagrams, when viewed as a set of instructions for how to build the corresponding element of the Kac-Moody algebra, provide half of the story about the roots. Namely, they provide an exhaustive list of candidates for different root vectors. This only gives an upper bound for the root space dimension, and evaluating the list of candidates using:
\begin{enumerate}
    \item the Serre relations which assert that specific generators commute, which are entirely manifest in this description,
    \item the other Serre relations, which are partially manifest in this description,
    \item the Jacobi identity, which is not manifest in this description,
    \item and anticommutativity of the Lie bracket, which here shows up as the fact $ad_{x_1}ad_{x_2}...ad_{b}(a) = -ad_{x_1}ad_{x_2}...ad_{a}(b)$, which leads to a simple combinatorial move in this description.
\end{enumerate}
That a particular root space has non-zero multiplicity can be validated using the Gabriel-Kac theorem. Indeed, this graphical calculus lends itself naturally to generating the quiver representations.
\subsection{The Calculus for Quivers}
Quivers are perhaps the most egregious example in mathematics of something that is very easy to define, but difficult to study and understand.
\begin{dfn}
    A \textbf{quiver} is a directed multigraph, i.e. it is a pair of sets $V$ and $E$ with source and target maps $s,t: E\rightarrow V$
\end{dfn}
What gives life to quivers is the following:
\begin{dfn}
    A \textbf{representation} of the quiver $(V,E, s,t)$ is a collection of vector spaces $W_v$ for each $v\in V$ and linear maps $W_e: W_{s(e)}\rightarrow W_{t(e)}$ for each edge. Two representations $U$ and $W$ are \textbf{isomorphic} if there are linear isomorphisms $F_v: W_{v}\rightarrow U_v$ such that $F_{t(e)} \circ W_e = U_e\circ F_{s(e)}$ for each $v\in V$ and $e\in E$.
\end{dfn}
While these are both the correct definitions, there are easier ways to see what is going on. The simplest case of Kac-Moody algebras is the $A_n$ case, meaning those given by Dynkin diagrams which are paths of length $n$. The Lie algebras represented by these are the complex special linear algebras $sl_{n+1}$. Correspondingly, the quivers with underlying graph $A_n$ have the simplest class of indecomposable representations. We may take the $A_n$ diagram as an example
\begin{figure}[h!]\label{fig:A_n}
    \centering
    \begin{tikzpicture}[auto, node distance=2cm, transform shape]
            \node[state] (1) {1};
                \node[state] (2) [right of = 1] {2};
                \node (3) [right of = 2] {...};
                \node[state] (4) [right of = 3] {n-1};
                \node[state] (5) [right of = 4] {n};

                \draw (1) to node[left] {} (2);
                \draw (2) to node[left] {} (3);
                \draw (3) to node[left] {} (4);
                \draw (4) to node[left] {} (5);
        \end{tikzpicture}
    \caption{The $A_n$ Dynkin Diagram}
    
\end{figure}\\
We might, for example, give it the following orientation:\\
\begin{figure}[h]\label{fig:A_n quiver}
    \centering
    \begin{tikzpicture}[auto, node distance=2cm, transform shape]
            \node[state] (1) {1};
                \node[state] (2) [right of = 1] {2};
                \node (3) [right of = 2] {...};
                \node[state] (4) [right of = 3] {n-1};
                \node[state] (5) [right of = 4] {n};

                \draw [arrows = {-Stealth[scale=2]}]  (1)  -- (2);
                \draw [arrows = {-Stealth[scale=2]}]  (2)  -- (3);
                \draw [arrows = {-Stealth[scale=2]}]  (3)  -- (4);
                 \draw [arrows = {-Stealth[scale=2]}]  (4)  -- (5);
        \end{tikzpicture}
    \caption{An $A_n$ Quiver}
\end{figure}\\
Constructing a representation of that quiver amounts to picking dimensions for the vector spaces corresponding to each node, and then picking matrices. We will represent this by drawing duplicates of the same node and labelling the edges with matrix coefficients. We would see for example that if we tried to build a representation of the $A_6$ quiver, so oriented, with dimension vector $(0,1,2,1,1,1)$, we would get a system which looks like this:\\
\begin{figure}[h]\label{fig:A_n quiver}
    \centering
    \begin{tikzpicture}[auto, node distance=2cm, transform shape]
            \node[state] (2) {2};
                \node[state] (3) [above right of = 1] {3};
                \node[state] (3too) [below right of = 1] {3};
                \node[state] (4) [below right of = 3] {4};
                \node[state] (5) [right of = 4] {5};
                \node[state] (6) [right of = 5] {6};

                \draw [arrows = {-Stealth[scale=2]}]  (2) -- (3) node[midway, above left]{a};
                \draw [arrows = {-Stealth[scale=2]}]  (2)  -- (3too) node[midway, below left]{b};
                \draw [arrows = {-Stealth[scale=2]}]  (3)  -- (4) node[midway, above right]{c};
                 \draw [arrows = {-Stealth[scale=2]}]  (3too)  -- (4) node[midway, below right]{d};
                 \draw [arrows = {-Stealth[scale=2]}]  (4)  -- (5) node[midway, above]{e};;
                 \draw [arrows = {-Stealth[scale=2]}]  (5)  -- (6) node[midway, above]{f};;
                 
        \end{tikzpicture}
    \caption{The Space of $(0,1,2,1,1,1)$ $A_6$ quiver representations}
\end{figure}\\
Really, we only consider representations up to a choice of basis (equivalently \textit{isomorphism}). Generically, the image of the vector space $2$ is one dimensional, as is the kernel of the map $3\rightarrow 4$. So we may choose a basis of $3$ such that one element is in the kernel and the other is the image of the basis element chosen for $2$. This yields a system of the following kind:\\
\begin{figure}[h!]\label{fig:A_n quiver}
    \centering
    \begin{tikzpicture}[auto, node distance=2cm, transform shape]
            \node[state] (2) {2};
                \node[state] (3) [above right of = 1] {3};
                \node[state] (3too) [below right of = 1] {3};
                \node[state] (4) [below right of = 3] {4};
                \node[state] (5) [right of = 4] {5};
                \node[state] (6) [right of = 5] {6};

                \draw [arrows = {-Stealth[scale=2]}]  (2) -- (3) node[midway, above left]{1};
                \draw [arrows = {-Stealth[scale=2]}]  (2)  -- (3too) node[midway, below left]{0};
                \draw [arrows = {-Stealth[scale=2]}]  (3)  -- (4) node[midway, above right]{c};
                 \draw [arrows = {-Stealth[scale=2]}]  (3too)  -- (4) node[midway, below right]{0};
                 \draw [arrows = {-Stealth[scale=2]}]  (4)  -- (5) node[midway, above]{e};;
                 \draw [arrows = {-Stealth[scale=2]}]  (5)  -- (6) node[midway, above]{f};;
                 
        \end{tikzpicture}
    \caption{The Space of $(0,1,2,1,1,1)$ $A_6$ quiver subject to generic rank constraints}
\end{figure}\\[1cm]
We will, from now on, omit zero edges. This allows us to say the following:
\begin{prop}
    A quiver representation is decomposable if there is a choice of basis which renders the associated labeled diagram (with zero edges omitted) disconnected.
\end{prop}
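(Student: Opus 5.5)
We have to pin down what the labeled diagram is, and then show that a disconnection gives a direct sum decomposition. Fix a basis $\{w_{v,1},\dots,w_{v,d_v}\}$ of each $W_v$. The diagram has one node for each basis vector $w_{v,j}$, labeled $v$. For each arrow $e\colon v\to u$ of the quiver, we draw an edge from $w_{v,j}$ to $w_{u,k}$ exactly when the $(k,j)$ matrix coefficient of $W_e$ is nonzero. So the diagram with zero edges omitted is the graph on all chosen basis vectors whose edges are the nonzero entries of the structure matrices. Changing basis changes these matrices by conjugation, $W_e\mapsto F_u^{-1}W_eF_v$, and this is exactly the isomorphism freedom in the definition. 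So it suffices to prove: if, for some fixed basis, this graph splits into two nonempty unions of components $C_1\sqcup C_2$, then the representation is a direct sum of two nonzero subrepresentations.

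Define $U_v=\mathrm{span}\{w_{v,j}\in C_1\}$ and $U'_v=\mathrm{span}\{w_{v,j}\in C_2\}$. Since every basis vector lies in exactly one of $C_1,C_2$, we have $W_v=U_v\oplus U'_v$ for each $v$. Next we check that $U=(U_v)$ is a subrepresentation. Take $w_{v,j}\in C_1$ and an arrow $e\colon v\to u$. Then $W_e(w_{v,j})=\sum_k (W_e)_{kj}\,w_{u,k}$. Any $k$ with $(W_e)_{kj}\neq 0$ gives an edge of the diagram from $w_{v,j}$ to $w_{u,k}$, so $w_{u,k}$ lies in the same component, and hence in $C_1$. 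Therefore $W_e(U_v)\subseteq U_u$. The same argument gives $W_e(U'_v)\subseteq U'_u$. Thus $W=U\oplus U'$ as representations: each $W_e$ is block diagonal in the restricted bases. Both summands are nonzero because $C_1$ and $C_2$ are nonempty sets of basis vectors.

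The main obstacle is not the algebra, which is just block-diagonality of the matrices. It is making the informal diagram precise enough that ``disconnected'' has the meaning needed. Three points need care. First, zero-dimensional vertices contribute no nodes. Second, an edge must be recorded whenever any single matrix entry is nonzero, since the argument fails if edges are drawn only for some entries. Third, a lone node $w_{v,j}$ with no incident edges is a component in its own right; it gives the summand spanned by that vector, the simple representation $S_v$. With these conventions fixed, the direction stated follows. I would also note that the converse holds: if $W=U\oplus U'$ nontrivially, choosing a basis adapted to this splitting makes all cross entries zero, so the diagram is disconnected. This shows that the criterion characterizes decomposability.
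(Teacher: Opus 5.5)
Your proof is correct and follows the same route as the paper's: you split the basis along the connected components of the diagram to obtain a direct sum decomposition, and for the converse you choose a basis adapted to a given decomposition. Beyond the paper's argument, you make explicit the precise definition of the diagram and the check that each $W_e$ maps the span of a union of components into itself.
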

\begin{proof}
    A quiver representation is decomposable iff is isomorphic to a nontrivial direct sum. For a choice of basis subordinate to this direct sum, any basis vector in one part of the direct sum cannot be connected to any basis vector in another part. Alternately, if the diagram is disconnected, the representation may be written as the direct sum of the connected components.
\end{proof}

Our diagram now looks like:\\
\begin{figure}[h!]\label{fig:A_n quiver}
    \centering
    \begin{tikzpicture}[auto, node distance=2cm, transform shape]
            \node[state] (2) {2};
                \node[state] (3) [right of = 2] {3};
                \node[state] (3too) [below of = 3] {3};
                \node[state] (4) [right of = 3] {4};
                \node[state] (5) [right of = 4] {5};
                \node[state] (6) [right of = 5] {6};

                \draw [arrows = {-Stealth[scale=2]}]  (2) -- (3) node[midway, above ]{1};
              
                \draw [arrows = {-Stealth[scale=2]}]  (3)  -- (4) node[midway, above ]{c};
                
                 \draw [arrows = {-Stealth[scale=2]}]  (4)  -- (5) node[midway, above]{e};;
                 \draw [arrows = {-Stealth[scale=2]}]  (5)  -- (6) node[midway, above]{f};;
                 
        \end{tikzpicture}
    \caption{The Space of $(0,1,2,1,1,1)$ $A_6$ quiver subject to generic rank constraints}
\end{figure}\\
We may now make scaling choices for the basis elements of $4$, $5$, and $6$, and the system looks generically like:\\
\begin{figure}[h!]\label{fig:A_n quiver}
    \centering
    \begin{tikzpicture}[auto, node distance=2cm, transform shape]
            \node[state] (2) {2};
                \node[state] (3) [right of = 2] {3};
                \node[state] (3too) [below of = 3] {3};
                \node[state] (4) [right of = 3] {4};
                \node[state] (5) [right of = 4] {5};
                \node[state] (6) [right of = 5] {6};

                \draw [arrows = {-Stealth[scale=2]}]  (2) -- (3) node[midway, above ]{1};
              
                \draw [arrows = {-Stealth[scale=2]}]  (3)  -- (4) node[midway, above ]{1};
                
                 \draw [arrows = {-Stealth[scale=2]}]  (4)  -- (5) node[midway, above]{1};;
                 \draw [arrows = {-Stealth[scale=2]}]  (5)  -- (6) node[midway, above]{1};;
                 
        \end{tikzpicture}
    \caption{The generic representation of dimension $(0,1,2,1,1,1)$ in the leftward oriented $A_6$ quiver}
\end{figure}\\
The fact that this quiver is disconnected reflects the fact that there are no indecomposable representations of (any) $A_n$ quiver of these dimensions. The fact that the representation generically decomposes in this way is orientation dependent, as in the case where we two arrows from one dimensional spaces into a two dimensional space, these are generically not linearly dependent, and thus the graph would not have a single isolated vertex after being simplified as much as possible.
The following fact is important to the combinatorics of the quiver representations:
\begin{lem}\label{lem:treelike}
    If the graph of a quiver representation diagram is a forest, all of the nonzero coefficients may be set to $1$ by scaling the chosen basis.
\end{lem}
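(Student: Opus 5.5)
We prove the statement one connected component at a time; since the components are trees, it suffices to treat a single tree $T$. Its vertices are the chosen basis vectors, and its edges are the nonzero matrix coefficients. An edge from basis vector $u$ (at quiver vertex $s(e)$) to basis vector $w$ (at quiver vertex $t(e)$) with label $c\neq 0$ means that the coefficient of $w$ in $W_e(u)$ is $c$. Changing the basis by the diagonal rescaling $u\mapsto \lambda_u u$, with every $\lambda_u\in\C^\times$, changes this label to $c\,\lambda_u/\lambda_w$. A rescaling never turns a zero coefficient into a nonzero one or vice versa, so the diagram keeps the same underlying graph. The task is therefore to find nonzero scalars $\lambda_u$, one per vertex of $T$, with $c_{uw}\lambda_u\lambda_w^{-1}=1$ for every edge $(u,w)$ of $T$.

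We construct the $\lambda$'s by induction outward from a root, ignoring the edge directions of the underlying graph. Fix a root $r$ of $T$ and set $\lambda_r=1$. Every other vertex $v$ has a unique parent $p$ on the path from $v$ to $r$, and exactly one edge joins $v$ and $p$. We process the vertices in order of distance from $r$. When we reach $v$, $\lambda_p$ is already fixed, and we choose $\lambda_v$ to make that one edge's label equal to $1$. If the edge points $p\to v$ with label $c$, we set $\lambda_v=c\lambda_p$. If it points $v\to p$, we set $\lambda_v=\lambda_p/c$. Both values are nonzero because $c\neq0$. Each edge of $T$ is the parent edge of exactly one vertex, namely its endpoint farther from $r$, so every edge label gets normalized exactly once. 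Later choices only affect edges at or below the vertex being processed, so they never disturb an edge that has already been normalized.

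The step that needs care is the claim that every edge is handled exactly once without conflicts. This is precisely where the forest hypothesis is used: a tree with $m$ vertices has $m-1$ edges, which is exactly the number of free ratios available after fixing one scale. A cycle would impose a consistency condition, namely that the product of its labels, taken with orientation signs as exponents, be $1$, and this product is invariant under rescaling. One subtlety deserves a remark. The diagram draws separate nodes for the basis vectors of the same quiver vertex, and a rescaling acts on each basis vector independently, so all the $\lambda_u$ really are free parameters. Finally, the rescaling is a diagonal change of basis at each $W_v$, so the new representation is isomorphic to the old one, and its diagram is the same forest with every label equal to $1$.
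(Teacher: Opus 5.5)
Your argument is correct and complete. The paper states this lemma without a written proof; your propagation from a root, setting $\lambda_v=c\lambda_p$ or $\lambda_v=\lambda_p/c$ along each parent edge, is exactly the successive rescaling the paper carries out informally in its $A_6$ example, where the basis vectors at $4$, $5$, $6$ are rescaled in turn.

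The one point worth stating explicitly is that ``forest'' must be read in the multigraph sense. Parallel edges coming from distinct arrows between the same two basis vectors, as for the Kronecker quiver, count as a $2$-cycle, and self-loops coming from loop arrows are excluded. This is what justifies your claim that exactly one edge joins $v$ and its parent $p$. Your rescaling-invariance remark about cycles shows why this reading is needed.
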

\begin{dfn}
    We call an indecomposable quiver representation \textbf{tree-like} if there is a choice of basis which makes its underlying quiver representation diagram graph a tree.
\end{dfn}

We will find that tree-like quiver representations are important, as they correspond to the roots of Kac-Moody algebras.

\section{Simple Lie Algebras: Diagrams for Highest Roots}\label{sec:Simple}
\subsection{The Infinite Families $ABCD$}
The simplest case are the classical infinite families $A_n$, $B_n$, $C_n$, and $D_n$. In each of these cases, the process of searching through candidate roots terminates, and we eventually reach an inextensible end state. The process is, as expected, easiest for these very classical cases. In the $A_n$ case, all roots are sub-paths of the Dynkin diagram itself, and all other cases are simple modifications of this, with the major caveat being that the presence of a special feature, such as a double edge or triple point, allows one to "bounce" back while constructing these intervals. In the $C_n$ case this allows for a full doubling of the Dynkin diagram to be used, whereas in the $B_n$ and $D_n$ cases (those corresponding to the special orthogonal groups) the doubling terminates one root short of completion. The results may be seen in figure \ref{fig:simpleinfinite}.
\begin{figure}[h!]
    \centering

    \begin{tabular}{|c|c|c|}
    \hline
        Algebra Name & Dynkin Diagram & Highest Root Pattern \\ \hline
        $A_n$  & \begin{tikzpicture}[auto, node distance=2cm, scale = 0.4, transform shape]
            \node[state] (1) {1};
                \node[state] (2) [right of = 1] {2};
                \node (3) [right of = 2] {...};
                \node[state] (4) [right of = 3] {n-1};
                \node[state] (5) [right of = 4] {n};

                \draw (1) to node[left] {} (2);
                \draw (2) to node[left] {} (3);
                \draw (3) to node[left] {} (4);
                \draw (4) to node[left] {} (5);
        \end{tikzpicture} & \begin{tikzpicture}[auto, node distance=2cm, scale = 0.4, transform shape]
            \node[state] (1) {1};
                \node[state] (2) [right of = 1] {2};
                \node (3) [right of = 2] {...};
                \node[state] (4) [right of = 3] {n-1};
                \node[state] (5) [right of = 4] {n};

                \draw (1) to node[left] {} (2);
                \draw (2) to node[left] {} (3);
                \draw (3) to node[left] {} (4);
                \draw (4) to node[left] {} (5);
        \end{tikzpicture}\\ \hline
        $B_n$  & \begin{tikzpicture}[auto, node distance=2cm, scale = 0.4, transform shape]
            \node[state] (1) {1};
                \node[state] (2) [right of = 1] {2};
                \node (3) [right of = 2] {...};
                \node[state] (4) [right of = 3] {n-1};
                \node[state] (5) [right of = 4] {n};

                \draw (1) to node[left] {} (2);
                \draw (2) to node[left] {} (3);
                \draw (3) to node[left] {} (4);
                \draw (4) to[bend left] node[] {} (5);
                \draw (4) to[bend right] node[above] {$>$} (5);
        \end{tikzpicture} & \begin{tikzpicture}[auto, node distance=1.5cm, scale = 0.25, transform shape]
            \node[state] (1) {1};
                \node[state] (2) [right of = 1] {2};
                \node (3) [right of = 2] {...};
                \node[state] (4) [right of = 3] {n-1};
                \node[state] (5) [below right of = 4] {n};
                \node[state] (6) [above right of = 4] {n};
                \node[state] (7) [below right of = 6] {n-1};
                \node[state] (8) [right of = 7] {n-2};
                \node (9) [right of = 8] {...};
                \node[state] (10) [right of = 9] {2};

                \draw (1) to node[left] {} (2);
                \draw (2) to node[left] {} (3);
                \draw (3) to node[left] {} (4);
                \draw (4) to node[left] {} (5);
                \draw (4) to node[left] {} (6);
                \draw (5) to node[left] {} (7);
                \draw (6) to node[left] {} (7);
                \draw (7) to node[left] {} (8);
                \draw (8) to node[left] {} (9);
                \draw (9) to node[left] {} (10);
        \end{tikzpicture}\\ \hline
        $C_n$  & \begin{tikzpicture}[auto, node distance=2cm, scale = 0.4, transform shape]
            \node[state] (1) {1};
                \node[state] (2) [right of = 1] {2};
                \node (3) [right of = 2] {...};
                \node[state] (4) [right of = 3] {n-1};
                \node[state] (5) [right of = 4] {n};

                \draw (1) to node[left] {} (2);
                \draw (2) to node[left] {} (3);
                \draw (3) to node[left] {} (4);
                \draw (4) to[bend left] node[] {} (5);
                \draw (4) to[bend right] node[above] {$<$} (5);
        \end{tikzpicture} & \begin{tikzpicture}[auto, node distance=2cm, scale = 0.25, transform shape]
            \node[state] (1) {1};
                \node[state] (2) [right of = 1] {2};
                \node (3) [right of = 2] {...};
                \node[state] (4) [right of = 3] {n-1};
                \node[state] (5) [right of = 4] {n};
                \node[state] (6) [right of = 5] {n-1};
                \node[state] (7) [right of = 6] {n-2};
                \node (8) [right of = 7] {...};
                \node[state] (9) [right of = 8] {1};

                \draw (1) to node[left] {} (2);
                \draw (2) to node[left] {} (3);
                \draw (3) to node[left] {} (4);
                \draw (4) to node[left] {} (5);
                \draw (5) to node[left] {} (6);
                \draw (6) to node[left] {} (7);
                \draw (7) to node[left] {} (8);
                \draw (8) to node[left] {} (9);
        \end{tikzpicture}
        \\ 
        \hline
        $D_n$  & \begin{tikzpicture}[auto, node distance=2cm, scale = 0.4, transform shape]
            \node[state] (1) {1};
                \node[state] (2) [right of = 1] {2};
                \node (3) [right of = 2] {...};
                \node[state] (4) [right of = 3] {n-2};
                \node[state] (5) [below right of = 4] {n-1};
                \node[state] (6) [above right of = 4] {n};

                \draw (1) to node[left] {} (2);
                \draw (2) to node[left] {} (3);
                \draw (3) to node[left] {} (4);
                \draw (4) to node[left] {} (5);
                \draw (4) to node[left] {} (6);
        \end{tikzpicture} & \begin{tikzpicture}[auto, node distance=1.5cm, scale = 0.25, transform shape]
            \node[state] (1) {1};
                \node[state] (2) [right of = 1] {2};
                \node (3) [right of = 2] {...};
                \node[state] (4) [right of = 3] {n-2};
                \node[state] (5) [below right of = 4] {n-1};
                \node[state] (6) [above right of = 4] {n};
                \node[state] (7) [below right of = 6] {n-2};
                \node[state] (8) [right of = 7] {n-3};
                \node (9) [right of = 8] {...};
                \node[state] (10) [right of = 9] {2};

                \draw (1) to node[left] {} (2);
                \draw (2) to node[left] {} (3);
                \draw (3) to node[left] {} (4);
                \draw (4) to node[left] {} (5);
                \draw (4) to node[left] {} (6);
                \draw (5) to node[left] {} (7);
                \draw (6) to node[left] {} (7);
                \draw (7) to node[left] {} (8);
                \draw (8) to node[left] {} (9);
                \draw (9) to node[left] {} (10);
        \end{tikzpicture}
        \\ 
        \hline

    \end{tabular}
    \caption{The highest root patterns of the infinite families.}
    \label{fig:simpleinfinite}
\end{figure}

There are two relationships here given by folding Dynkin diagrams using Kac's trick. The $B_n$ diagram is obtained from $D_{n+1}$ by performing the trick on the two final nodes, and the $C_n$ is obtained from $A_{2n-1}$ by applying the trick to the bilateral symmetry. The relationships between these algebras are shown clearly by the highest root patterns.

\subsection{The Finite Family $E$}
There is also a finite family of Dynkin graphs which represent simple algebras. These are the $E_6$, $E_7$, and $E_8$ graphs which round out the trinity of simply laced $ADE$ graphs
\begin{figure}[h]
    \centering
    \begin{tabular}{|c|c|c|}
    \hline
        Algebra Name & Dynkin Diagram & Highest Root Pattern \\ \hline
        $E_6$  & \begin{tikzpicture}[auto, node distance=2cm, scale = 0.3, transform shape]
            \node[state] (1) {1};
            \node[state] (2) [right of = 1] {2};
            \node[state] (3) [right of = 2] {3};
            \node[state] (x) [above of = 3] {x};
            \node[state] (4) [right of = 3] {4};
            \node[state] (5) [right of = 4] {5};
            \draw (1) to node[left] {} (2);
            \draw (2) to node[left] {} (3);
            \draw (3) to node[left] {} (4);
            \draw (4) to node[left] {} (5);
            \draw (3) to node[left] {} (x);

        \end{tikzpicture} & \begin{tikzpicture}[auto, node distance=2cm, scale = 0.3, transform shape]
                \node[state] (1) {x};
            
                \node[state] (2) [right of = 1] {3};
                
                \node[state] (3) [above right of = 2] {4};
                \node[state] (4) [below right of = 2] {2};
                
                \node[state] (5) [above right of = 3] {5};
                \node[state] (6) [below right of = 3] {3};
                \node[state] (7) [below right of = 4] {1};

                \node[state] (8) [above right of = 6] {4};
                \node[state] (9) [below right of = 6] {2};

                \node[state] (10) [above right of = 9] {3};

                \node[state] (11) [right of = 10] {x};

                \draw (1) to node[left] {} (2);

                \draw (2) to node[left] {} (3);
                \draw (2) to node[left] {} (4);

                \draw (3) to node[left] {} (5);
                \draw (3) to node[left] {} (6);

                \draw (4) to node[left] {} (6);
                \draw (4) to node[left] {} (7);

                \draw (8) to node[left] {} (5);
                \draw (8) to node[left] {} (6);
            
                \draw (9) to node[left] {} (6);
                \draw (9) to node[left] {} (7);

                \draw (8) to node[left] {} (10);
                \draw (9) to node[left] {} (10);

                \draw (10) to node[right] {} (11);
                
        \end{tikzpicture}\\ \hline
        $E_7$  & \begin{tikzpicture}[auto, node distance=2cm, scale = 0.3, transform shape]
            \node[state] (1) {1};
            \node[state] (2) [right of = 1] {2};
            \node[state] (3) [right of = 2] {3};
            \node[state] (4) [right of = 3] {4};
            \node[state] (5) [right of = 4] {5};
            \node[state] (x) [above of = 4] {x};
            \node[state] (6) [right of = 5] {6};
            \draw (1) to node[left] {} (2);
            \draw (2) to node[left] {} (3);
            \draw (3) to node[left] {} (4);
            \draw (4) to node[left] {} (5);
            \draw (4) to node[left] {} (x);
            \draw (5) to node[left] {} (6);

        \end{tikzpicture} & \begin{tikzpicture}[auto, node distance=1.5cm, scale = 0.3, transform shape]
            \node[state] (1) {x};
            
            \node[state] (2) [right of = 1] {4};
            
            \node[state] (3) [above right of = 2] {3};
            \node[state] (4) [below right of = 2] {5};

            \node[state] (5) [above right of = 3] {2};
            \node[state] (6) [above right of = 4] {4};
            \node[state] (7) [below right of = 4] {6};

            \node[state] (8) [above right of = 5] {1};
            \node[state] (9) [above right of = 6] {3};
            \node[state] (10) [above right of = 7] {5};
            
            \node[state] (11) [above right of = 9] {2};
            \node[state] (12) [above right of = 10] {4};
            
            \node[state] (13) [above right of = 12] {3};
            \node[state] (14) [below right of = 12] {x};

            \node[state] (15) [above right of = 14] {4};

            \node[state] (16) [right of = 15] {5};
            
            \node[state] (17) [right of = 16] {6};

            \draw (1) to node[left] {} (2);

            \draw (2) to node[left] {} (3);
            \draw (2) to node[left] {} (4);

            \draw (3) to node[left] {} (5);
            \draw (3) to node[left] {} (6);
            \draw (4) to node[left] {} (6);
            \draw (4) to node[left] {} (7);

            \draw (5) to node[left] {} (8);
            \draw (5) to node[left] {} (9);
            \draw (6) to node[left] {} (9);
            \draw (6) to node[left] {} (10);
            \draw (7) to node[left] {} (10);

            \draw (8) to node[left] {} (11);
            \draw (9) to node[left] {} (11);
            \draw (9) to node[left] {} (12);
            \draw (10) to node[left] {} (12);

            \draw (11) to node[left] {} (13);
            \draw (12) to node[left] {} (13);
            \draw (12) to node[left] {} (14);

            \draw (13) to node[left] {} (15);
            \draw (14) to node[left] {} (15);

            \draw (15) to node[left] {} (16);
            \draw (16) to node[left] {} (17);

        \end{tikzpicture}\\ \hline
        $E_8$  & \begin{tikzpicture}[auto, node distance=2cm, scale = 0.3, transform shape]
            \node[state] (1) {1};
            \node[state] (2) [right of = 1] {2};
            \node[state] (3) [right of = 2] {3};
            \node[state] (4) [right of = 3] {4};
            \node[state] (5) [right of = 4] {5};
            \node[state] (6) [right of = 5] {6};
            \node[state] (x) [above of = 5] {x};
            \node[state] (7) [right of = 6] {7};
            \draw (1) to node[left] {} (2);
            \draw (2) to node[left] {} (3);
            \draw (3) to node[left] {} (4);
            \draw (4) to node[left] {} (5);
            \draw (5) to node[left] {} (x);
            \draw (5) to node[left] {} (6);
            \draw (6) to node[left] {} (7);

        \end{tikzpicture} & \begin{tikzpicture}[auto, node distance=1.5cm, scale = 0.3, transform shape]
            \node[state] (1) {x};
            
            \node[state] (2) [right of = 1] {5};

            \node[state] (4) [above right of = 2] {6};
            \node[state] (5) [below right of = 2] {4};

            \node[state] (6) [above right of = 4] {7};
            \node[state] (7) [above right of = 5] {5};
            \node[state] (8) [below right of = 5] {3};

            \node[state] (9) [above right of = 7] {6};
            \node[state] (10) [above right of = 8] {4};
            \node[state] (11) [below right of = 8] {2};

            \node[state] (12) [above right of = 10] {5};
            \node[state] (13) [above right of = 11] {3};
            \node[state] (14) [below right of = 11] {1};

            \node[state] (15) [above right of = 12] {x};
            \node[state] (16) [above right of = 13] {4};
            \node[state] (17) [above right of = 14] {2};

            \node[state] (18) [above right of = 16] {5};
            \node[state] (19) [above right of = 17] {3};

            \node[state] (20) [above right of = 18] {6};
            \node[state] (21) [above right of = 19] {4};

            \node[state] (22) [above right of = 20] {7};
            \node[state] (23) [above right of = 21] {5};

            \node[state] (24) [above right of = 23] {6};
            \node[state] (25) [below right of = 23] {x};

            \node[state] (26) [above right of = 25] {5};

            \node[state] (27) [below right of = 26] {4};
            
            \node[state] (28) [above right of = 27] {3};
            
            \node[state] (29) [below right of = 28] {2};

            \node[state] (30) [above right of = 29] {1};
            
            \draw (1) to (2);

            \draw (2) to (4);
            \draw (2) to (5);

            \draw (4) to (6);
            \draw (4) to (7);
            \draw (5) to (7);
            \draw (5) to (8);

            \draw (6) to (9);
            \draw (7) to (9);
            \draw (7) to (10);
            \draw (8) to (10);
            \draw (8) to (11);

            \draw (9) to (12);
            \draw (10) to (12);
            \draw (10) to (13);
            \draw (11) to (13);
            \draw (11) to (14);

            \draw (12) to (15);
            \draw (12) to (16);
            \draw (13) to (16);
            \draw (13) to (17);
            \draw (14) to (17);

            \draw (15) to (18);
            \draw (16) to (18);
            \draw (16) to (19);
            \draw (17) to (19);

            \draw (18) to (20);
            \draw (18) to (21);
            \draw (19) to (21);

            \draw (20) to (22);
            \draw (20) to (23);
            \draw (21) to (23);

            \draw (22) to (24);
            \draw (23) to (24);
            \draw (23) to (25);

            \draw (24) to (26);
            \draw (25) to (26);

            \draw (26) to (27);

            \draw (27) to (28);

            \draw (28) to (29);

            \draw (29) to  (30);

        \end{tikzpicture}
        \\ 
        \hline
    \end{tabular}
    \caption{The $E$ family of Dynkin diagrams and their highest root patterns}
    \label{fig:EFamily}
\end{figure}
The $E_7$ and $E_8$ highest root patterns have tails which destroy the bilateral symmetry of the $E_6$ highest root diagram. The addition of an extra node, which would allow for bilaterally symmetric representatives, would also allow these patterns to be repeated ad infinitum, which we will see in full detail when discussing the affine algebras.

\subsection{$F_4$ and $G_2$}
The final two simple Lie algebras are given by Kac's trick on the $E_6$ and $D_4$ simply laced Lie algebras. They are not part of any family, finite or otherwise.
  \begin{figure}[h]
      \centering
   
    \begin{tabular}{|c|c|c|}
    \hline
        Algebra Name & Dynkin Diagram & Highest Root Pattern \\ \hline
        $F_4$  & \begin{tikzpicture}[auto, node distance=2cm, scale = 0.4, transform shape]
            \node[state] (1) {1};
                \node[state] (2) [right of = 1] {2};
                \node[state] (3) [right of = 2] {3};
                \node[state] (4) [right of = 3] {4};

                \draw (1) to node[left] {} (2);
                \draw (2) to[bend left] node[left] {} (3);
                \draw (2) to[bend right] node[above, yshift = 0.15cm] {$>$} (3);
                \draw (3) to node[left] {} (4);
        \end{tikzpicture} & \begin{tikzpicture}[auto, node distance=2cm, scale = 0.4, transform shape]
                \node[state] (1) {1};
            
                \node[state] (2) [right of = 1] {2};
                
                \node[state] (3) [above right of = 2] {3};
                \node[state] (4) [below right of = 2] {3};
                
                \node[state] (5) [above right of = 3] {4};
                \node[state] (6) [below right of = 3] {2};
                \node[state] (7) [below right of = 4] {4};

                \node[state] (8) [above right of = 6] {3};
                \node[state] (9) [below right of = 6] {3};

                \node[state] (10) [above right of = 9] {2};

                \node[state] (11) [right of = 10] {1};

                \draw (1) to node[left] {} (2);

                \draw (2) to node[left] {} (3);
                \draw (2) to node[left] {} (4);

                \draw (3) to node[left] {} (5);
                \draw (3) to node[left] {} (6);
                \draw (3) to node[left] {} (7);
                \draw (4) to node[left] {} (5);
                \draw (4) to node[left] {} (6);
                \draw (4) to node[left] {} (7);

                \draw (8) to node[left] {} (5);
                \draw (8) to node[left] {} (6);
                \draw (8) to node[left] {} (7);
                \draw (9) to node[left] {} (5);
                \draw (9) to node[left] {} (6);
                \draw (9) to node[left] {} (7);

                \draw (8) to node[left] {} (10);
                \draw (9) to node[left] {} (10);

                \draw (10) to node[right] {} (11);
                
        \end{tikzpicture}\\ \hline
        $G_2$  & \begin{tikzpicture}[auto, node distance=2cm, scale = 0.4, transform shape]
            \node[state] (1) {1};
                \node[state] (2) [right of = 1] {2};

                \draw (1) to[bend left] node[left] {} (2);
                \draw (1) to[bend right] node[above, yshift = 0.18cm] {$>$} (2);
                \draw (1) to node[left] {} (2);
                
        \end{tikzpicture} & \begin{tikzpicture}[auto, node distance=1.5cm, scale = 0.4, transform shape]
            \node[state] (1) {1};
            
            \node[state] (2) [right of = 1] {2};
            \node[state] (3) [above of = 2] {2};
            \node[state] (4) [below of = 2] {2};
                
            \node[state] (5) [right of = 2] {1};

                \draw (1) to node[left] {} (2);
                \draw (1) to node[left] {} (3);
                \draw (1) to node[left] {} (4);
                
                \draw (5) to node[left] {} (2);
                \draw (5) to node[left] {} (3);
                \draw (5) to node[left] {} (4);

        \end{tikzpicture}\\ \hline
    \end{tabular}
      \caption{The $F_4$ and $G_2$ exceptional Lie Algebras.}
      \label{fig:f4g2}
  \end{figure}
  \\We see again in the highest root pattern the relationship between the patterns for the original simply laced diagrams and those constructed via the trick. 
  \\This completes the list of diagrams for which the process of finding new valid patterns terminates, and thus completes the list of finite dimensional simple Lie algebras. The remaining algebras are all infinite dimensional, but there are still subtleties in how that behavior manifests.
\section{Affine Lie Algebras: Repeating Diagrams for Imaginary Roots}\label{sec:Affine}
 The second part of the trichotomy are the ``Affine Lie Algebras," with positive semidefinite Cartan Matrices, The first example is the $\tilde{A}_n$, which should be read as ``affine $A_n$". In this and the third part of the trichotomy, there appear ``imaginary roots," which have nonpositive norm according to the bilinear form $DA$. Real roots, which is to say those with positive norm, have corresponding root spaces of dimension one. Hence it makes sense to discuss ``the" root vector corresponding to a root, or at least a choice of basis subordinate to the root spaces obviates a need to make an additional choice of representative. The root spaces of imaginary roots, on the other hand, have dimension (multiplicity) which is greater than one. Furthermore, the linear dependencies among different constructions of root vectors do not lend themselves to a canonical choice of basis for each root space. The long-time behavior of root multiplicities along imaginary directions does depend on if the norm is negative or zero, see for example \cite{CarboneMultiplicities}. For now we specialize to the Affine Lie algebras, for which $DA$ is of positive semidefinite type. These have polynomial growth rate in the number of generators used. The easiest way to produce a semipositive definite generalized Cartan matrix is just to close the path of an $A_n$ to a loop. This includes, formally, as a subclass the loop on two elements, i.e. the graph $\tilde{A}_1$, which is a pair of vertices connected by two undirected lines.

\begin{figure}[h!]
    \centering
    \includegraphics[scale = 0.5]{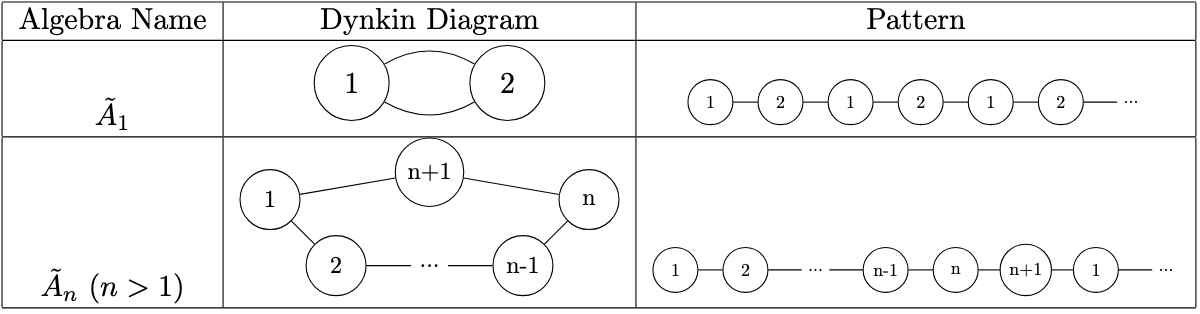}
    \caption{The Affine $A$ Algebras}
    \label{fig:AffineLoop}
\end{figure}

In these cases we see the two primary features particular to Affine algebras on clear display. The imaginary roots have presentations which are patterns that repeat some number of times, and may be extended indefinitely. We also see, for the first time, that we will not be able to deform some patterns with the same number of each generator in to some others. For example, if we take the algebra $\tilde{A}_2$, we could take the element given by the pattern:\\
\begin{center}
 \begin{tikzpicture}[auto, node distance=1.5cm, scale = 0.4, transform shape]
 \node[state] (1) {1};
 \node[state] (2) [right of = 1] {2};
 \node[state] (3) [right of = 2] {3};

 \draw (1) to (2);
 \draw (2) to (3);
 \end{tikzpicture}
\end{center}

which would be independent with either\\
\begin{center}
 \begin{tikzpicture}[auto, node distance=1.5cm, scale = 0.4, transform shape]
 \node[state] (1) {2};
 \node[state] (2) [right of = 1] {3};
 \node[state] (3) [right of = 2] {1};

 \draw (1) to (2);
 \draw (2) to (3);
 \end{tikzpicture}
\end{center}
or with\\
\begin{center}
 \begin{tikzpicture}[auto, node distance=1.5cm, scale = 0.4, transform shape]
 \node[state] (1) {1};
 \node[state] (2) [right of = 1] {3};
 \node[state] (3) [right of = 2] {2};

 \draw (1) to (2);
 \draw (2) to (3);
 \end{tikzpicture}
\end{center}
and the latter two would be independent of one another. However, there would be a linear dependence between all three, which is given by the Jacobi identity. This gives a root space multiplicity of two for the root given by one of each generator. It is a general phenomenon with Affine algebras (given by connected graphs) that the imaginary positive root spaces all have the same dimension. This is manifest in the construction by loop algebras in chapter seven of Kac's book \cite{Kac_2003}. Aside from graphs containing loops, there are a few ways to create infinitely extensible patterns. We now present the Dynkin diagrams and their associated infinitely extensible patterns.

\subsection{Two Special Features}
In the finite case, the presence of what we called a ``special feature," namely a double edge or a vertex of degree three, allowed us to bounce back and repeat our steps of adding vertices in reverse. The imaginary roots of these algebras are given by iterations of a repeating pattern using this bounce. To remain affine, the graphs with two special features must have them at each end of the path, and the features must be no more complicated than either of these options. If we order the vertices in the path, there are nine ways to create graphs with the features we want, but three are redundant when the orders are reversed. This leaves six families. The first three of these are untwisted, meaning that they can be obtained directly from simple algebras via central extensions of the loop algebras, but the remaining three require a more complicated construction involving twisting the loop algebra by automorphisms of the Dynkin graphs.
\begin{figure}[h]
    \centering
  \begin{tabular}{|c|c|c|}
    \hline
       Name & Diagram & Pattern\\\hline
       $\tilde{D}_n$ & \begin{tikzpicture}[auto, node distance=1.5cm, scale = 0.4, transform shape]

        \node[state] (1) {3};
        
        \node[state] (2) [below left of = 1] {1};
        \node[state] (3) [above left of = 1] {2};
        
        \node[state] (4) [right of = 1] {4};
        
        \node (5) [right of = 4] {...};
        
        \node[state] (6) [right of = 5] {n-1};

        \node[state] (7) [above right of = 6] {n+1};
        \node[state] (8) [below right of = 6] {n};

        \draw (1) to (2);
        \draw (1) to (3);
        \draw (1) to (4);

        \draw (4) to (5);
        
        \draw (5) to (6);

        \draw (6) to (7);
        \draw (6) to (8);
        \end{tikzpicture} &
        \begin{tikzpicture}[auto, node distance=1.5cm, scale = 0.4, transform shape]

        \node[state] (1) {3};
        
        \node[state] (2) [below right of = 1] {1};
        \node[state] (3) [above right of = 1] {2};
        
        \node[state] (4) [right of = 1] {4};
        
        \node (5) [right of = 4] {...};
        
        \node[state] (6) [right of = 5] {n-1};

        \node[state] (7) [above right of = 6] {n+1};
        \node[state] (8) [below right of = 6] {n};

        \node[state] (9) [below right of = 7] {n-1};

        \node (10) [right of = 9] {...};

        \node[state] (11) [right of = 10] {4};

        \node[state] (12) [right of = 11] {3};
        
        \node[state] (13) [above right of = 12] {2};
        \node[state] (14) [below right of = 12] {1};

        \node[state] (15) [above right of = 14] {3};

        \node (16) [right of = 15] {...};

        \draw (1) to (2);
        \draw (1) to (3);
        \draw (1) to (4);

        \draw (4) to (5);
        
        \draw (5) to (6);

        \draw (6) to (7);
        \draw (6) to (8);

        \draw (7) to (9);
        \draw (8) to (9);
        
        \draw (9) to (10);

        \draw (10) to (11);
        
        \draw (11) to (12);

        \draw (12) to (13);
        \draw (12) to (14);

        \draw (13) to (15);
        \draw (14) to (15);

        \draw (15) to (16);
        \end{tikzpicture}

       \\ \hline
       $\tilde{B}_n$ &\begin{tikzpicture}[auto, node distance=1.5cm, scale = 0.4, transform shape]

       \node[state] (a) {2};
       \node (b) [right of = a] {...};
       \node[state] (c)[right of = {b}] {n-1};

        \node[state] (1) [left of = a] {1};
        \node[state] (2)  [below right of = c] {n};
        \node[state] (3) [above right of = c] {n+1};
        
       \draw (a) to (b);
       \draw (b) to (c);
       \draw (1) to [bend left] (a);
       \draw (1) to [bend right] node[above] {$<$}(a) ;
       \draw (c) to (2);
       \draw (c) to (3);

        \end{tikzpicture} & \begin{tikzpicture}[auto, node distance=1.5cm, scale = 0.4, transform shape]

       \node[state] (a) {2};
       \node[state] (2one) [above right of = a] {1};
       \node[state] (2two) [right of = a] {3};
       \node[state] (FUBAR) [below right of =a] {1};
       \node (b) [right of = 2two] {...};
       \node[state] (c)[right of = b] {n-1};

        \draw (a) to (FUBAR);
       \node[state] (n+1) [below right of = c] {n};
       \node[state] (n+2) [above right of = c] {n+1};

       \draw (2two) to (b);
       \draw (a) to (2two);
       \draw (a) to (2one);
       \draw (b) to (c);

       \node[state] (d) [below right of = n+2] {n-1};
       \node (e) [right of = d] {...};
       \node[state] (f)[right of = e] {2};
       \node[state] (2three) [above right of = f] {1};
       \node[state] (2four) [below right of = f] {1};
       \node[state] (1two) [above right of = 2four] {2};
       \node (end) [right of = 1two] {...};

       \draw (d) to (e);
       \draw (e) to (f);
       \draw (c) to (n+1);
       \draw (c) to (n+2);
       \draw (n+2) to (d);
       \draw (n+1) to (d);

       \draw (f) to (2three);
       \draw (f) to (2four);
       \draw (2three) to (1two);
       \draw (2four) to (1two);

       \draw (1two) to (end);

        \end{tikzpicture} \\ \hline

       $\tilde{C}_n$ &\begin{tikzpicture}[auto, node distance=1.5cm, scale = 0.4, transform shape]

       \node[state] (a) {2};
       \node (b) [right of = a] {...};
       \node[state] (c)[right of = {b}] {n};

       \draw (a) to (b);
       \draw (b) to (c);

       \node[state] (1) [left of = a] {1};
       \draw (1) to [bend left] (a);
       \draw (1) to [bend right] node[above]{$>$} (a);

       \node[state] (2) [right of = c] {n+1};
       \draw (2) to [bend right] (c);
       \draw (2) to [bend left] node[above]{$<$} (c);

        \end{tikzpicture} & \begin{tikzpicture}[auto, node distance=1.5cm, scale = 0.4, transform shape]

       \node[state] (a) {1};
       \node (b) [right of = a] {...};
       \node[state] (c)[right of = b] {n};

       \draw (a) to (b);
       \draw (b) to (c);
         \node[state] (n+1) [right of = c] {n+1};
       \node[state] (d) [right of = n+1] {n};
       \node (e) [right of = d] {...};
       \node[state] (f)[right of = e] {1};
       \node[state] (g) [right of = f] {2};
       \node[state] (h) [right of = g] {...};

       \draw (d) to (e);
       \draw (e) to (f);
       \draw (c) to (n+1);
       \draw (n+1) to (d);
       \draw (f) to (g);
       \draw (g) to (h);

        \end{tikzpicture}  \\ \hline

        $A_{2n+1}^{(2)}$ &\begin{tikzpicture}[auto, node distance=1.5cm, scale = 0.4, transform shape]

       \node[state] (a) {2};
       \node (b) [right of = a] {...};
       \node[state] (c)[right of = {b}] {n-1};

        \node[state] (1) [left of = a] {1};
        \node[state] (2)  [below right of = c] {n};
        \node[state] (3) [above right of = c] {n+1};
        
       \draw (a) to (b);
       \draw (b) to (c);
       \draw (1) to [bend left] (a);
       \draw (1) to [bend right] node[above] {$>$}(a) ;
       \draw (c) to (2);
       \draw (c) to (3);

        \end{tikzpicture} & \begin{tikzpicture}[auto, node distance=1.5cm, scale = 0.4, transform shape]

       \node[state] (a) {2};
       \node[state] (2one) [above right of = a] {1};
       \node[state] (2two) [right of = a] {3};
       \node[state] (2tworeal) [below right of = a] {1};
       \draw (a) to (2tworeal);
     
       \node (b) [right of = 2two] {...};
       \node[state] (c)[right of = b] {n-1};

       \node[state] (n+1) [below right of = c] {n};
       \node[state] (n+2) [above right of = c] {n+1};

       \draw (2two) to (b);
       \draw (a) to (2two);
       \draw (a) to (2one);
       \draw (b) to (c);

       \node[state] (d) [below right of = n+2] {n-1};
       \node (e) [right of = d] {...};
       \node[state] (f)[right of = e] {2};
       \node[state] (2three) [above right of = f] {1};
       \node[state] (2four) [below right of = f] {1};
       \node[state] (1two) [above right of = 2four] {2};
       \node (end) [right of = 1two] {...};

       \draw (d) to (e);
       \draw (e) to (f);
       \draw (c) to (n+1);
       \draw (c) to (n+2);
       \draw (n+2) to (d);
       \draw (n+1) to (d);

       \draw (f) to (2three);
       \draw (f) to (2four);
       \draw (2three) to (1two);
       \draw (2four) to (1two);

       \draw (1two) to (end);

        \end{tikzpicture}  \\ \hline

        $A_{2n+2}^{(2)}$&\begin{tikzpicture}[auto, node distance=1.5cm, scale = 0.4, transform shape]

       \node[state] (a) {2};
       \node (b) [right of = a] {...};
       \node[state] (c)[right of = {b}] {n};

       \draw (a) to (b);
       \draw (b) to (c);

       \node[state] (1) [left of = a] {1};
       \draw (1) to [bend left] (a);
       \draw (1) to [bend right] node[above]{$>$} (a);

       \node[state] (2) [right of = c] {n+1};
       \draw (2) to [bend right] (c);
       \draw (2) to [bend left] node[above]{$>$} (c);

        \end{tikzpicture} & \begin{tikzpicture}[auto, node distance=1.5cm, scale = 0.4, transform shape]

\node[state] (a) {1};
       \node[state] (2one) [above right of = a] {2};
       \node[state] (2two) [right of = a] {2};
     
       \node (b) [right of = 2two] {...};
       \node[state] (c)[right of = b] {n-1};

       \node[state] (n+1) [below right of = c] {n};
       \node[state] (n+2) [above right of = c] {n+1};

       \draw (2two) to (b);
       \draw (a) to (2two);
       \draw (a) to (2one);
       \draw (b) to (c);

       \node[state] (d) [below right of = n+2] {n-1};
       \node (e) [right of = d] {...};
       \node[state] (f)[right of = e] {2};
       \node[state] (2three) [right of = f] {1};
       \node[state] (2four) [right of = 2three] {2};

       \node (end) [right of = 2four] {...};

       \draw (d) to (e);
       \draw (e) to (f);
       \draw (c) to (n+1);
       \draw (c) to (n+2);
       \draw (n+2) to (d);
       \draw (n+1) to (d);

       \draw (f) to (2three);
       \draw (2three) to (2four);

       \draw (2four) to (end);
     
        \end{tikzpicture}   \\ \hline

        $D_{n+2}^{(2)}$ &\begin{tikzpicture}[auto, node distance=1.5cm, scale = 0.4, transform shape]

       \node[state] (a) {2};
       \node (b) [right of = a] {...};
       \node[state] (c)[right of = {b}] {n};

       \draw (a) to (b);
       \draw (b) to (c);

       \node[state] (1) [left of = a] {1};
       \draw (1) to [bend left] (a);
       \draw (1) to [bend right] node[above]{$<$} (a);

       \node[state] (2) [right of = c] {n+1};
       \draw (2) to [bend right] (c);
       \draw (2) to [bend left] node[above]{$>$} (c);

        \end{tikzpicture} & \begin{tikzpicture}[auto, node distance=1.5cm, scale = 0.4, transform shape]
        \node[state] (a) {2};
       \node[state] (2one) [above right of = a] {1};
       \node[state] (2two) [right of = a] {3};
       \node[state] (2tworeal) [below right of = a] {1};
       \draw (a) to (2tworeal);
     
       \node (b) [right of = 2two] {...};
       \node[state] (c)[right of = b] {n-1};

       \node[state] (n+1) [below right of = c] {n};
       \node[state] (n+2) [above right of = c] {n+1};

       \draw (2two) to (b);
       \draw (a) to (2two);
       \draw (a) to (2one);
       \draw (b) to (c);

       \node[state] (d) [below right of = n+2] {n-1};
       \node (e) [right of = d] {...};
       \node[state] (f)[right of = e] {2};
       \node[state] (2three) [above right of = f] {1};
       \node[state] (2four) [below right of = f] {1};
       \node[state] (1two) [above right of = 2four] {2};
       \node (end) [right of = 1two] {...};

       \draw (d) to (e);
       \draw (e) to (f);
       \draw (c) to (n+1);
       \draw (c) to (n+2);
       \draw (n+2) to (d);
       \draw (n+1) to (d);

       \draw (f) to (2three);
       \draw (f) to (2four);
       \draw (2three) to (1two);
       \draw (2four) to (1two);

       \draw (1two) to (end);

        \end{tikzpicture}  \\ \hline
    \end{tabular}
    \caption{The Affine Lie Algebras With Two Special Features}
    \label{fig:twospecial}
\end{figure}
The type $\tilde{D}$ graphs can be used to obtain the type $\tilde{B}$ and $D^{(2)}$ graphs by applying Kac's trick via the $\Z_2$ symmetry on one pair or both pairs of leaves respectively. The bilateral $\Z_2$ symmetry of the $\tilde{D}$ type graphs can be used to obtain the $A^{(2)}_{odd}$ graphs, and a further application via the $\Z_2$ symmetry on the leaves gives the $A^{2}_{even}$ type. $\tilde{C}$ graphs have a bilateral symmetry which allows for the construction of smaller $\tilde{C}$ graphs. $\tilde{C}$ graphs can also be constructed by applying bilateral symmetry to an $\tilde{A}$ of double the size.
\newpage
\subsection{$\tilde{E}$ Algebras}
There are also affine analogues to the $E_n$ family, which may be obtained from the normal $E_n$ via affinization or extension of the loop algebra. Each addition of a vertex allows for the creation of a building block which can be iterated infinitely and which are each clearly related to the highest roots for the non-affine $E_n$ algebras. All of the indefinitely iterable patterns enjoy a bilateral symmetry within each segment, which only $E_6$ did in the non-affine case.

\begin{figure}[h]
    \centering
        \begin{tabular}{|c|c|c|}
    \hline
        Algebra Name & Dynkin Diagram & Pattern \\ \hline
        $\tilde{E}_6$  & \begin{tikzpicture}[auto, node distance=2cm, scale = 0.3, transform shape]
            \node[state] (1) {1};
            \node[state] (2) [right of = 1] {2};
            \node[state] (3) [right of = 2] {3};
            \node[state] (x) [above of = 3] {x};
            \node[state] (y) [above of = x] {y};
            \node[state] (4) [right of = 3] {4};
            \node[state] (5) [right of = 4] {5};
            \draw (1) to node[left] {} (2);
            \draw (2) to node[left] {} (3);
            \draw (3) to node[left] {} (4);
            \draw (4) to node[left] {} (5);
            \draw (3) to node[left] {} (x);
            \draw (x) to (y);

        \end{tikzpicture} & \begin{tikzpicture}[auto, node distance=1.7cm, scale = 0.3, transform shape]
\node[state] (31) {3};

\node[state] (x2) [right of = 31] {x};
\node[state] (42) [above of = x2] {4};
\node[state] (22) [below of = x2] {2};

\node[state] (53) [right of = 42] {5};
\node[state] (y3) [right of = x2] {y};
\node[state] (13) [right of = 22] {1};
\node[state] (33) [below of = 13] {3};

\node[state] (44) [right of = 53] {4};
\node[state] (x4) [right of = y3] {x};
\node[state] (24) [right of = 13] {2};

\node[state] (35) [right of = x4] {3};


\draw (31) to (x2);
\draw (31) to (42);
\draw (31) to (22);

\draw (42) to (53);
\draw (x2) to (y3);
\draw (22) to (13);
\draw (42) to (33);
\draw (x2) to (33);
\draw (22) to (33);

\draw (44) to (53);
\draw (x4) to (y3);
\draw (24) to (13);
\draw (44) to (33);
\draw (x4) to (33);
\draw (24) to (33);

\draw (35) to (x4);
\draw (35) to (44);
\draw (35) to (24);

\end{tikzpicture}\\ \hline
        $\tilde{E}_7$  & \begin{tikzpicture}[auto, node distance=2cm, scale = 0.3, transform shape]
            \node[state] (1) {1};
            \node[state] (2) [right of = 1] {2};
            \node[state] (3) [right of = 2] {3};
            \node[state] (4) [right of = 3] {4};
            \node[state] (5) [right of = 4] {5};
            \node[state] (x) [above of = 4] {x};
            \node[state] (6) [right of = 5] {6};
            \node[state] (7) [right of = 6] {7};
            \draw (1) to node[left] {} (2);
            \draw (2) to node[left] {} (3);
            \draw (3) to node[left] {} (4);
            \draw (4) to node[left] {} (5);
            \draw (4) to node[left] {} (x);
            \draw (5) to node[left] {} (6);
            \draw (6) to (7);

        \end{tikzpicture} & \begin{tikzpicture}[auto, node distance=1.7cm, scale = 0.3, transform shape]
\node[state] (41) {4};

\node[state] (52) [above right of = 41] {5};
\node[state] (x2) [right of = 41, xshift =-0.5cm] {x};
\node[state] (32) [below right of = 41] {3};

\node[state] (63) [above right of  = 52 ] {6};
\node[state] (43) [below right of = 52 ] {4};
\node[state] (23) [below right of = 32] {2};

\node[state] (74) [above right of  =  63] {7};
\node[state] (54) [below right of  = 63 ] {5};
\node[state] (34) [below right of = 43 ] {3};
\node[state] (14) [ below right of = 23 ] {1};

\node[state] (65) [below right of = 74] {6};
\node[state] (45) [below right of = 54] {4};
\node[state] (25) [below right of = 34] {2};

\node[state] (56) [below right of = 65] {5};
\node[state] (x6) [right of = 45, xshift = -0.5cm] {x};
\node[state] (36) [below right of = 45 ] {3};

\node[state] (47) [below right of = 56] {4};


\draw (41) to (52);
\draw (41) to (x2);
\draw (41) to (32);

\draw (52) to (63);
\draw (52) to (43);
\draw (x2) to (43);
\draw (32) to (43);
\draw (32) to (23);

\draw (63) to (74);
\draw (63) to (54);
\draw (43) to (54);
\draw (43) to (34);
\draw (23) to (34);
\draw (23) to (14);

\draw (74) to (65);
\draw (54) to (65);
\draw (54) to (45);
\draw (34) to (45);
\draw (34) to (25);
\draw (14) to (25);

\draw (65) to (56);
\draw (45) to (56);
\draw (45) to (x6);
\draw (45) to (36);
\draw (25) to (36);

\draw (56) to (47);
\draw (x6) to (47);
\draw (36) to (47);

\end{tikzpicture}\\ \hline
        $\tilde{E}_8$  & \begin{tikzpicture}[auto, node distance=2cm, scale = 0.3, transform shape]
            \node[state] (0) {0};
            \node[state] (1) [right of = 0] {1};
            \node[state] (2) [right of = 1] {2};
            \node[state] (3) [right of = 2] {3};
            \node[state] (4) [right of = 3] {4};
            \node[state] (5) [right of = 4] {5};
            \node[state] (6) [right of = 5] {6};
            \node[state] (x) [above of = 5] {x};
            \node[state] (7) [right of = 6] {7};
            \draw (0) to (1);
            \draw (1) to node[left] {} (2);
            \draw (2) to node[left] {} (3);
            \draw (3) to node[left] {} (4);
            \draw (4) to node[left] {} (5);
            \draw (5) to node[left] {} (x);
            \draw (5) to node[left] {} (6);
            \draw (6) to node[left] {} (7);

        \end{tikzpicture} & \begin{tikzpicture}[auto, node distance=1.7cm, scale = 0.2, transform shape]
\node[state] (61) {6};

\node[state] (72) [above right of = 61] {7};
\node[state] (x2) [right of = 61, xshift = -0.5cm] {x};
\node[state] (52) [below right of = 61] {5};

\node[state] (83) [above right of = 72] {8};
\node[state] (63) [above right of = 52] {6};
\node[state] (43) [below right of = 52] {4};

\node[state] (74) [below right of = 83] {7};
\node[state] (54) [below right of = 63] {5};
\node[state] (34) [below right of = 43] {3};

\node[state] (65) [below right of = 74] {6};
\node[state] (45) [below right of = 54] {4};
\node[state] (25) [below right of = 34] {2};

\node[state] (x6) [right of = 65, xshift = -0.5cm] {x};
\node[state] (56) [below right of = 65] {5};
\node[state] (36) [below right of = 45] {3};
\node[state] (16) [below right of = 25] {1};

\node[state] (67) [above right of = 56] {6};
\node[state] (47) [above right of = 36] {4};
\node[state] (27) [above right of = 16] {2};

\node[state] (78) [above right of = 67] {7};
\node[state] (58) [above right of = 47] {5};
\node[state] (38) [above right of = 27] {3};

\node[state] (89) [above right of = 78] {8};
\node[state] (69) [above right of = 58] {6};
\node[state] (49) [above right of = 38] {4};

\node[state] (710) [below right of = 89] {7};
\node[state] (x10) [right of = 69, xshift = -0.5cm] {x};
\node[state] (510) [below right of = 69] {5};

\node[state] (611) [above right of = 510] {6};


\draw (61) to (72);
\draw (61) to (x2);
\draw (61) to (52);

\draw (72) to (83);
\draw (72) to (63);
\draw (x2) to (63);
\draw (52) to (63);
\draw (52) to (43);

\draw (83) to (74);
\draw (63) to (74);
\draw (63) to (54);
\draw (43) to (54);
\draw (43) to (34);

\draw (74) to (65);
\draw (54) to (65);
\draw (54) to (45);
\draw (34) to (45);
\draw (34) to (25);

\draw (65) to (x6);
\draw (65) to (56);
\draw (45) to (56);
\draw (45) to (36);
\draw (25) to (36);
\draw (25) to (16);

\draw (67) to (x6);
\draw (67) to (56);
\draw (47) to (56);
\draw (47) to (36);
\draw (27) to (36);
\draw (27) to (16);

\draw (611) to (710);
\draw (611) to (x10);
\draw (611) to (510);

\draw (710) to (89);
\draw (710) to (69);
\draw (x10) to (69);
\draw (510) to (69);
\draw (510) to (49);

\draw (89) to (78);
\draw (69) to (78);
\draw (69) to (58);
\draw (49) to (58);
\draw (49) to (38);

\draw (78) to (67);
\draw (58) to (67);
\draw (58) to (47);
\draw (38) to (47);
\draw (38) to (27);

\end{tikzpicture}
        \\ 
        \hline
    \end{tabular}
    
    \caption{The Affine $\tilde{E}_n$ Algebras}
    \label{fig:affineE}
\end{figure}
\newpage
\subsection{$G_2$ and $F_4$-Like Affine Diagrams}

The remaining Affine algebras have Dynkin diagrams which are extensions of the graphs $G_2$ and $F_4$, in the sense that they contain these graphs as oriented subgraphs. Each of these has a single oriented multiple edge, and therefore can be obtained from a simply laced graph via Kac's trick. We begin with the algebras with graphs extending that of $G_2$. There are three such graphs that are affine. The affinization $\tilde{G}_2$ of $G_2$ and the twisted algebra $A_2^{(2)}$ are both the result of applying Kac's trick to $\tilde{D^4}$, either via a $\Z_3$ symmetry on three of the leaves or the $\Z_4$ symmetry on all four. $D_4^{(3)}$, on the other hand, is obtained via the $\Z_3$ symmetry of the $\tilde{E}_6$ graph. This method of obtaining the algebras is manifest in the choice of indefinitely iterable pattern for each of these, as it is clear that relabeling according to the symmetry will give one from the other.

\begin{figure}[h!]
    \centering
    \begin{tabular}{|c|c|c|}
        \hline
        Name & Diagram  & Pattern\\ \hline

         $\tilde{G}_2$ & 
         
         \begin{tikzpicture}[auto, node distance=1.5cm, scale = 0.4, transform shape]
             \node[state] (1) {1};
             \node[state] (2) [right of = 1] {2};
             \node[state] (3) [right of = 2] {3};

             \draw (2) to [bend right] node [above = 2.1] {$\mathbb{>}$} (3);
             \draw (2) to [bend left] (3);
             \draw (2) to  (3);

             \draw (1) to (2);
         \end{tikzpicture}

         &

         \begin{tikzpicture}[auto, node distance=1.5cm, scale = 0.3, transform shape]
             \node[state] (1) {2};
             \node[state] (2) [right of = 1] {3};
             \node[state] (3) [below of = 2] {3};
             \node[state] (4) [below of = 3] {3};
             \node[state] (5) [above of = 2] {1};
             \node[state] (6) [right of = 2 ] {2};

             \draw (1) to (2);
             \draw (1) to (3);
             \draw (1) to (4);
             \draw (1) to (5);

             \draw (2) to (6);
             \draw (3) to (6);
             \draw (4) to (6);
             \draw (5) to (6);

         \end{tikzpicture}

         \\ \hline

         $D_4^{(3)}$ & 
         
         \begin{tikzpicture}[auto, node distance=1.5cm, scale = 0.5, transform shape]
             \node[state] (1) {1};
             \node[state] (2) [right of = 1] {2};
             \node[state] (3) [right of = 2] {3};

             \draw (1) to [bend right] node [above = 2.1] {$\mathbb{>}$} (2);
             \draw (1) to [bend left] (2);
             \draw (1) to  (2);

             \draw (2) to (3);
         \end{tikzpicture}

         &

         \begin{tikzpicture}[auto, node distance=1.7cm, scale = 0.3, transform shape]
\node[state] (31) {1};

\node[state] (x2) [right of = 31] {2};
\node[state] (42) [above of = x2] {2};
\node[state] (22) [below of = x2] {2};

\node[state] (53) [right of = 42] {3};
\node[state] (y3) [right of = x2] {3};
\node[state] (13) [right of = 22] {3};
\node[state] (33) [below of = 13] {1};

\node[state] (44) [right of = 53] {2};
\node[state] (x4) [right of = y3] {2};
\node[state] (24) [right of = 13] {2};

\node[state] (35) [right of = x4] {1};


\draw (31) to (x2);
\draw (31) to (42);
\draw (31) to (22);

\draw (42) to (53);
\draw (42) to (y3);
\draw (42) to (13);
\draw (x2) to (53);
\draw (x2) to (y3);
\draw (x2) to (13);
\draw (22) to (53);
\draw (22) to (y3);
\draw (22) to (13);
\draw (42) to (33);
\draw (x2) to (33);
\draw (22) to (33);

\draw (44) to (53);
\draw (44) to (y3);
\draw (44) to (13);
\draw (x4) to (53);
\draw (x4) to (y3);
\draw (x4) to (13);
\draw (24) to (53);
\draw (24) to (y3);
\draw (24) to (13);
\draw (44) to (33);
\draw (x4) to (33);
\draw (24) to (33);

\draw (35) to (x4);
\draw (35) to (44);
\draw (35) to (24);

\end{tikzpicture}

         \\ \hline

         $A_2^{(2)}$ & 
         
         \begin{tikzpicture}[auto, node distance=1.5cm, scale = 0.5, transform shape]
             \node[state] (1) {1};
             \node[state] (2) [right of = 1] {2};

            \draw (1) to [bend left = 60] (2);
            \draw (1) to [bend left = 30] (2);
            \draw (1) to [bend right = 30] node[above, yshift = 0.1cm] {$>$} (2);
            \draw (1) to [bend right = 60] (2);
         \end{tikzpicture}

         &

         \begin{tikzpicture}[auto, node distance=1.5cm, scale = 0.3, transform shape]
             \node[state] (1) {1};
             
             \node[state] (2) [right of = 1] {2};
             \node[state] (3) [below of = 2] {2};
             \node[state] (4) [below of = 3] {2};
             \node[state] (5) [above of = 2] {2};
             
             \node[state] (6) [right of = 2 ] {1};

             \draw (1) to (2);
             \draw (1) to (3);
             \draw (1) to (4);
             \draw (1) to (5);

             \draw (2) to (6);
             \draw (3) to (6);
             \draw (4) to (6);
             \draw (5) to (6);

         \end{tikzpicture}

         \\ \hline
    
    \end{tabular}
    \caption{$G_2$ like Affine Lie Algebras}
    \label{fig:G2Likes}
\end{figure}
\newpage
Finally we have the two $F_4$ like Affine algebras. These are obtained via extending the $F_4$ graph by adding an additional vertex to one side or the other. $\tilde{F}_4$ is the affinization of $F_4$, but is also obtained from $\tilde{E}_6$ via a bilateral symmetry, just as $F_4$ is obtained from $E_6$. $E_6^{(2)}$, on the other hand, is obtained from the bilateral symmetry of $\tilde{E}_7$. Again, we see that this method for obtaining the algebras is manifest in the indefinitely iterable pattern chosen.
\newpage
\begin{figure}[h!]
    \centering
    \begin{tabular}{|c|c|c|} \hline
        Name & Diagram & Pattern\\ \hline
        $\tilde{F}_4$ &

        \begin{tikzpicture}[auto, node distance=1.5cm, scale = 0.4, transform shape]
        \node[state] (1) {1};
        \node[state] (2) [right of = 1] {2};
        \node[state] (3) [right of = 2] {3};
        \node[state] (4) [right of = 3] {4};
        \node[state] (5) [right of = 4] {5};

        \draw (1) to (2);
        \draw (2) to (3);

        \draw (3) to [bend left] (4);
        \draw (3) to [bend right] node[above, yshift = 0.1cm] {$>$} (4);
        \draw (4) to (5);
        
        \end{tikzpicture}
        & 
     
\begin{tikzpicture}[auto, node distance=1.7cm, scale = 0.3, transform shape]
\node[state] (31) {3};

\node[state] (x2) [right of = 31] {4};
\node[state] (42) [above of = x2] {2};
\node[state] (22) [below of = x2] {4};

\node[state] (53) [right of = 42] {1};
\node[state] (y3) [right of = x2] {5};
\node[state] (13) [right of = 22] {5};
\node[state] (33) [below of = 13] {3};

\node[state] (44) [right of = 53] {2};
\node[state] (x4) [right of = y3] {4};
\node[state] (24) [right of = 13] {4};

\draw (x2) to (13);
\draw (22) to (y3);
\draw (x4) to (13);
\draw (24) to (y3);

\node[state] (35) [right of = x4] {3};


\draw (31) to (x2);
\draw (31) to (42);
\draw (31) to (22);

\draw (42) to (53);
\draw (x2) to (y3);
\draw (22) to (13);
\draw (42) to (33);
\draw (x2) to (33);
\draw (22) to (33);

\draw (44) to (53);
\draw (x4) to (y3);
\draw (24) to (13);
\draw (44) to (33);
\draw (x4) to (33);
\draw (24) to (33);

\draw (35) to (x4);
\draw (35) to (44);
\draw (35) to (24);

        \end{tikzpicture}
        \\
        \hline

        $E_6^{(2)}$ &

        \begin{tikzpicture}[auto, node distance=1.5cm, scale = 0.4, transform shape]
        \node[state] (1) {1};
        \node[state] (2) [right of = 1] {2};
        \node[state] (3) [right of = 2] {3};
        \node[state] (4) [right of = 3] {4};
        \node[state] (5) [right of = 4] {5};

        \draw (1) to (2);
        \draw (2) to [bend left] (3);
        \draw (2) to [bend right] node[above, yshift = 0.1cm] {$>$} (3);
        \draw (3) to (4);
        \draw (4) to (5);
        \end{tikzpicture}
        & 
        \begin{tikzpicture}[auto, node distance=1.7cm, scale = 0.3, transform shape]
\node[state] (41) {2};

\node[state] (52) [above right of = 41] {3};
\node[state] (x2) [right of = 41, xshift =-0.5cm] {1};
\node[state] (32) [below right of = 41] {3};

\node[state] (63) [above right of  = 52 ] {4};
\node[state] (43) [below right of = 52 ] {2};
\node[state] (23) [below right of = 32] {4};
\draw (52) to (23);
\draw (32) to (63);

\node[state] (74) [above right of  =  63] {5};
\node[state] (54) [below right of  = 63 ] {3};
\node[state] (34) [below right of = 43 ] {3};
\node[state] (14) [ below right of = 23 ] {5};
\draw (23) to (74);
\draw (23) to (54);
\draw (63) to (14);
\draw (63) to (34);

\node[state] (65) [below right of = 74] {4};
\node[state] (45) [below right of = 54] {2};
\node[state] (25) [below right of = 34] {4};
\draw (25) to (74);
\draw (25) to (54);
\draw (65) to (14);
\draw (65) to (34);

\node[state] (56) [below right of = 65] {3};
\node[state] (x6) [right of = 45, xshift = -0.5cm] {1};
\node[state] (36) [below right of = 45 ] {3};
\draw (56) to (25);
\draw (36) to (65);

\node[state] (47) [below right of = 56] {2};


\draw (41) to (52);
\draw (41) to (x2);
\draw (41) to (32);

\draw (52) to (63);
\draw (52) to (43);
\draw (x2) to (43);
\draw (32) to (43);
\draw (32) to (23);

\draw (63) to (74);
\draw (63) to (54);
\draw (43) to (54);
\draw (43) to (34);
\draw (23) to (34);
\draw (23) to (14);

\draw (74) to (65);
\draw (54) to (65);
\draw (54) to (45);
\draw (34) to (45);
\draw (34) to (25);
\draw (14) to (25);

\draw (65) to (56);
\draw (45) to (56);
\draw (45) to (x6);
\draw (45) to (36);
\draw (25) to (36);

\draw (56) to (47);
\draw (x6) to (47);
\draw (36) to (47);

\end{tikzpicture}
        \\
        \hline
    \end{tabular}
    \caption{$F_4$ like Affine Lie Algebras}
    \label{fig:F4likes}
\end{figure}
Cumulatively, this is an exhaustive list of the Affine Kac-Moody algebras.

\section{Indefinite (Wild Type) Lie Algebras: Exponential Growth Via Binary Coding}\label{sec:WildBinary}
We identify Dynkin diagrams up to isomorphism with generalized Cartan matrices up to permutation of indices. We define a pre-order on the Cartan matrices as follows:
\begin{dfn}\label{def:orderonmatrices}
If $A$ is an $n\times n$ generalized Cartan matrix, $B$ is a $k\times k$ generalized Cartan matrix, and $n\leq k$, we will write
$$ A \leq_c B$$
If there is an inclusion $\sigma:\{1,...,n\} \rightarrow \{1,...,k\}$ and 
$$ -A_{ij} \leq -B_{\sigma(i), \sigma(j)}$$
For all $i$ and $j$.
\end{dfn}

After the ensuing quotient is taken, this is equivalent to the order on Dynkin diagrams given by inclusion of directed multigraphs. We will need the following well known fact

\begin{prop}\label{prop:affinesareasplit}
    The irreducible Affine type generalized Cartan matrices are pairwise $\leq_c$ incomparable, and every generalized Cartan matrix is either $\leq_c$-less than some affine matrix, or $\leq_c$ greater than some affine matrix.
\end{prop}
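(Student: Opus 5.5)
The first claim follows from monotonicity of type under $\leq_c$. The second follows from the standard forbidden-subdiagram argument behind the classification of finite and affine diagrams. Throughout, ``generalized Cartan matrix'' means an indecomposable one, i.e.\ a connected Dynkin diagram, since the statement is only meaningful componentwise. I will use the trichotomy from Chapter 4 of \cite{Kac_2003}. An indecomposable $A$ is of finite, affine or indefinite type according as there is $u>0$ with $Au>0$, $Au=0$, or $Au<0$ respectively. I will also use its corollary (Lemma 4.5 there): if an indecomposable $B$ has a proper principal submatrix of affine type, then $B$ is of indefinite type.

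\textbf{Step 1 (monotonicity).} Suppose $A\leq_c B$ via $\sigma$, with $A$ affine and $B$ indecomposable, and let $u>0$ satisfy $Au=0$. Extend $u$ by zero along $\sigma$ to a vector $\tilde u\geq 0$ for $B$. Since the off-diagonal entries of $B$ are at least as negative as those of $A$, we get $B\tilde u\leq 0$ coordinatewise. There are two cases.
\begin{enumerate}
\item If $\sigma$ is not surjective, let $B'$ be the principal submatrix of $B$ on the image of $\sigma$. Then $B'\geq_c A$ on the same index set, and $B$ has a proper principal submatrix $B'$ with $B'u\leq 0$, $u>0$. By Kac's trichotomy $B'$ is affine (if $B'\neq A$ entrywise) or indefinite, and in either case Lemma 4.5 makes $B$ indefinite.
\item If $\sigma$ is a bijection but some entry is strictly more negative, then $Bu\leq 0$ with $Bu\neq 0$ and $u>0$. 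This is incompatible with $B$ being finite or affine, since the trichotomy says $Bu\geq 0$ forces $Bu=0$ in the affine case, and finite type is ruled out similarly. So $B$ is indefinite.
\end{enumerate}
In both cases $A\leq_c B$ with $A\neq B$ forces $B$ to be indefinite. This proves that distinct affine matrices are $\leq_c$-incomparable.

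\textbf{Step 2 (finite and affine matrices lie below an affine one).} If $C$ is affine there is nothing to prove, since $C\leq_c C$. If $C$ is of finite type, I will appeal to the lists in \S\ref{sec:Simple} and \S\ref{sec:Affine}: each finite-type diagram $X_n$ is the diagram of its untwisted affinization $\tilde X_n$ with one vertex removed. Hence $C\leq_c \tilde C$.

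\textbf{Step 3 (indefinite matrices lie above an affine one).} This is the main obstacle. Let $C$ be indecomposable of indefinite type. I will show that its diagram contains an affine diagram in the sense of $\leq_c$, allowing both deletion of vertices and weakening of edge labels. The argument is the usual case analysis.
\begin{enumerate}
\item If some pair has $a_{ij}a_{ji}\geq 4$, weaken that pair to $(a_{ij},a_{ji})\in\{(2,2),(1,4),(4,1)\}$, giving $\tilde A_1$ or $A_2^{(2)}$.
\item If the diagram contains a cycle, weaken all edges on a shortest cycle to single edges, which yields $\tilde A_n$.
\item Otherwise the diagram is a tree with edge multiplicities at most $3$. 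If there is a vertex of degree $\geq 4$, it contains $\tilde D_4$. If there are two branch points, or a branch point together with a multiple edge, or two multiple edges, a shortest path between them yields one of the families in Figure~\ref{fig:twospecial}. A triple edge with anything beyond $G_2$ yields a diagram in Figure~\ref{fig:G2Likes}. The remaining cases are a single branch point with long arms, or a single double edge with long arms. Long arms yield $\tilde E_6,\tilde E_7,\tilde E_8$ in the branch-point case, and $\tilde F_4$ or $E_6^{(2)}$ in the double-edge case.
\end{enumerate}
If none of these configurations occurs, the diagram is on the finite list of \S\ref{sec:Simple}, contradicting indefiniteness.

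The delicate part is keeping track of edge orientations in the non-simply-laced cases, so that the extracted affine subdiagram really is $\leq_c C$. For this I would first try to reduce to the simply laced case via Kac's folding trick. Failing that, I would simply check the finitely many local configurations against the tables of \S\ref{sec:Affine}.
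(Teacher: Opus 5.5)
The paper gives no proof of this proposition (it is quoted as a ``well known fact''), so there is nothing to compare against. Your argument is the standard one from Chapter 4 of Kac: the trichotomy, the lemma on proper principal submatrices, and the case analysis behind the classification of finite and affine diagrams. It is correct.

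Your restriction to indecomposable matrices is necessary, not just convenient. The matrix $E_8\oplus A_2$ is of finite type, so it lies above no affine matrix: a connected affine matrix below it would lie below $E_8$ or $A_2$, contradicting your Step 1. It also lies below no affine matrix, since the only affine diagram containing $E_8$ is $\tilde E_8$, which has only nine vertices.

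Two small repairs are needed.

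\textbf{Step 1, Case 1.} The parenthetical is reversed: $B'$ is affine exactly when it agrees with $A$ entrywise, and indefinite otherwise. Your paraphrase of Kac's lemma also only covers an affine $B'$. For an indefinite $B'$ you need the lemma in its actual form: proper principal submatrices of an indecomposable finite or affine matrix are sums of finite-type ones. It is simpler to skip $B'$ altogether. In both cases $B\tilde u\leq 0$ and $B\tilde u\neq 0$. The strict coordinate comes from the strictly more negative entry when $\sigma$ is onto. Otherwise it comes from a vertex outside the image of $\sigma$ adjacent to it, which exists because $B$ is indecomposable. Then $v=-\tilde u$ satisfies $Bv\geq 0$ and $Bv\neq 0$, with $v$ neither $>0$ nor $0$. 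This excludes finite and affine type for $B$ at once.

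\textbf{Step 3, orientations.} The orientation bookkeeping is not delicate and needs no folding. Triple edges can be weakened to double edges, and each configuration you extract occurs in the affine list in every orientation:
a fork plus a double edge gives $B_n^{(1)}$ or $A_{2n-1}^{(2)}$;
two double edges give $C_n^{(1)}$, $D_{n+1}^{(2)}$ or $A_{2n}^{(2)}$, according to their relative orientation;
a single double edge with at least three vertices on one side and at least two on the other gives $F_4^{(1)}$ or $E_6^{(2)}$;
a triple edge with one more neighbour gives $G_2^{(1)}$ or $D_4^{(3)}$;
and $(1,4)$ and $(4,1)$ are both $A_2^{(2)}$.

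Seen this way, the case analysis directly shows that every connected diagram either is on the finite list (hence lies below its untwisted affinization) or contains an affine configuration. So Steps 2 and 3 do not even need to know the type of $C$ in advance.
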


and another, less immediately obvious one:

\begin{prop}\label{prop:orderisalgebraic}
    Let $A \leq_c B$, then any nonzero pattern for $A$ is also a nonzero pattern for $B$, where all indices are replaced according to the $\sigma$ used in the definition of $\leq_c$.
\end{prop}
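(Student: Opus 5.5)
The plan is to compare the positive parts $E$ of the two algebras through the Serre presentation, using the Gabber--Kac description: $E(A)$ is the free Lie algebra on generators $e_1,\dots,e_n$ modulo the ideal generated by $(\ad_{e_i})^{1-A_{ij}}e_j$ for $i\neq j$, and similarly for $B$. A ``pattern'' is a nested bracket $\ad_{e_{i_m}}\cdots\ad_{e_{i_2}}(e_{i_1})$, or a linear combination of such. So the statement says: if such an element is nonzero in $E(A)$, then the element obtained by replacing every index $i$ with $\sigma(i)$ is nonzero in $E(B)$. I would factor the passage from $A$ to $B$ through the principal submatrix $B_J$ of $B$ on $J=\sigma(\{1,\dots,n\})$. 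This gives two steps.

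\textbf{Step 1: a surjection $E(B_J)\to E(A)$.} Define it on generators by $e_{\sigma(i)}\mapsto e_i$. To see that this is well defined, check that each Serre relation of $B_J$ maps to zero in $E(A)$. Since $-A_{ij}\le -B_{\sigma(i)\sigma(j)}$, we have $N:=1-B_{\sigma(i)\sigma(j)}\ge 1-A_{ij}$. Therefore
$$(\ad_{e_i})^{N}e_j=(\ad_{e_i})^{N-(1-A_{ij})}\bigl((\ad_{e_i})^{1-A_{ij}}e_j\bigr)=0$$
in $E(A)$. The map hits every generator, so it is surjective, and it sends the relabeled pattern to the original pattern. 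Now suppose the pattern is nonzero in $E(A)$. Its relabeling in $E(B_J)$ has a nonzero image, so it is itself nonzero. This step is essentially formal. Diagrammatically it is the observation that a larger valence only \emph{removes} cancellations: an $a$-vertex that could be slid left in the $A$-calculus can be slid left in the $B$-calculus too, but not conversely.

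\textbf{Step 2: injectivity of $E(B_J)\to E(B)$.} This map, again defined on generators, is the main obstacle. It is well defined for the same reason as before, since the relations of $B_J$ are among those of $B$. Injectivity needs an argument, and I would use the $Q_+$-grading. Both algebras are graded by the root lattice, and an element built only from the $e_j$ with $j\in J$ lives in degrees supported on $J$. I would then show that the defining ideal of $E(B)$, intersected with the span of degrees supported on $J$, equals the ideal of $E(B_J)$.

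My first attempt would use the realization through the full Kac--Moody algebra. The subalgebra of $\mathfrak g(B)$ generated by $e_j,f_j$ for $j\in J$, together with a suitable Cartan part, is a quotient of $\mathfrak g(B_J)$. It has no ideal meeting the Cartan trivially: any such ideal would be graded, and after summing with the remaining components it would generate an ideal of $\mathfrak g(B)$ meeting the Cartan trivially, which is impossible by the maximality property in Kac's definition. So this subalgebra is $\mathfrak g(B_J)$ itself, and its positive part is $E(B_J)$.

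The point I would take the most care with is this last extension argument. One must check that $\ad_{e_k}$ and $\ad_{f_k}$ for $k\notin J$ do not bring a nonzero element of $J$-supported degree back into the ideal. Degree considerations should handle this: any bracket involving an $f_k$ or $e_k$ with $k\notin J$ changes the $k$-coordinate of the degree. Combining Steps 1 and 2, the relabeled pattern is nonzero in $E(B)$, which is the statement.
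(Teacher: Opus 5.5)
Your proof is correct, but it is routed differently from the paper's and does more work than necessary.

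The paper's argument is your Step 1 applied to all of $B$ at once. It says $E_A$ is a quotient of $E_B$ via $e_{\sigma(i)}\mapsto e_i$, and every generator $e_m$ with $m\notin\sigma(\{1,\dots,n\})$ is implicitly sent to $0$. This map is well defined for two reasons:
\begin{itemize}
\item Any Serre relation $(\ad_{e_i})^{1-B_{ij}}e_j$ of $B$ with $m\in\{i,j\}$ contains $e_m$ at least once (note $1-B_{mj}\ge 1$), so it dies.
\item The relations among the $e_{\sigma(i)}$ die by exactly your exponent comparison.
\end{itemize}
The relabeled pattern maps to the original one, and that finishes the proof.

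So the injectivity of $E(B_J)\to E(B)$, which you flag as the main obstacle, never has to be addressed. Even inside your factorization it is immediate from the retraction $E(B)\to E(B_J)$ sending $e_m\mapsto 0$ for $m\notin J$. Its composite with $E(B_J)\to E(B)$ is the identity on generators.

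Your detour through the maximal-ideal definition of $\mathfrak g(B)$ also has a hidden cost. You (and the paper's appendix) define $E$ by the Serre presentation. To transfer your conclusion to that $E$, you need the Gabber--Kac theorem for $B$ and $B_J$. That is a substantial result, which Kac's book proves under a symmetrizability hypothesis, whereas $\le_c$ is defined for arbitrary generalized Cartan matrices.

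What your route buys is a structural statement about the genuine Kac--Moody algebras: the subdiagram algebra $\mathfrak g(B_J)$ embeds in $\mathfrak g(B)$. The degree check you single out is the right one. A $J$-supported element of $\mathfrak n_+$ is automatically killed by $\ad_{f_k}$ for $k\notin J$. Kac's lemma that a nonzero element of $\mathfrak n_+$ cannot be annihilated by all $\ad_{f_i}$ then kills a minimal-height element of any graded ideal of your subalgebra that misses the Cartan. One correction: that subalgebra is a priori only a quotient of the auxiliary algebra $\tilde{\mathfrak g}(B_J)$, not of $\mathfrak g(B_J)$. It is this lemma that upgrades it.
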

\begin{proof}
    Since $ad_a^n(b) = 0$ implies $ad_a^{n+k}(b) = 0$ for all $k > 0$, the positive algebra $E_A$ associated to the generalized Cartan matrix $A$ is a quotient of the positive algebra $E_B$ associated with $B$. Take $f$ to be the quotient map. Therefore if we have a sequence $Ad_{x_1},...,Ad_{x_n}(a)$, where each of the $x_i$ and $a$ are all generators, and we take $y_1,..,y_n$, $b$, the corresponding generators in $B$, $f(Ad_{y_1},...,Ad_{y_n}(b)) = Ad_{x_1},...,Ad_{x_n}(a)$, and thus if the latter is nonzero, so is the former.
\end{proof}

If $A$ is an irreducible affine Cartan matrix, and $B\leq_c A$ but $B\neq A$, then $B$ is a direct sum of distinct Dynkin diagrams corresponding to simple Lie algebras. Thus it has finitely many nonzero root patterns and finitely many roots. If $A$ is affine, then attempting to produce new roots of $A$ will lead to a repeating pattern, such as one of those given in the tables above and depending only on the starting point. This gives a polynomial upper bound, indeed a linear upper bound on the number of roots using below a certain number of generators.

Suppose now that $A$ is an irreducible affine Cartan matrix $A\leq_c B$ and $B$ is an irreducible generalized Cartan matrix. If $Q$ is the iterable pattern for $A$, we may take $R$ to be the same pattern replacing the generators with those for $B$. If $B$ has an additional vertex, the corresponding generator may be appended to $R$ to produce a pattern we will call $S_1$, and we will call $R$ $S_0$. If $B$ contains only an additional (directed) edge, we may append to $R$ an additional copy of the generator corresponding to the target of the new (directed) edge, producing $R'$. If we set $S_0 = RRR$ and $S_1 = RR'R$, we may encode binary sequences $(i_1,i_2,...,i_n)$ into the algebra itself as $S_{i_n},...,S_{i_2}S_{i_1} a$ (where $a$ is the starting pattern). The set of such codes will be linearly independent for a given length. Since these patterns all use less than $Cn$ generators (for some constant $C$ depending on the algebra), there will be at least $2^{k/C}$ elements of length $k$ asymptotically. This may be seen in a very elementary example, beginning with the algebra $\tilde{A}_1$ and some algebras built upon it, namely:
\begin{figure}[h]
    \centering

        \begin{tabular}{c c}
        $\tilde{A}_1$ &      \begin{tikzpicture}[auto, node distance = 1.5 cm]
        \node[state] (1) {1};
        \node[state] (2) [right of = 1] {2};
        \draw (1) to[bend left] node[left] {} (2);
        \draw (1) to[bend right] node[left] {} (2);
    \end{tikzpicture}\\

        Added edge & \begin{tikzpicture}[auto, node distance=1.5 cm, ]
            \node[state] (1) {1};
            \node[state] (2) [right of = 1] {2};

            \draw[<->] (1) to [bend left = 30] (2);
            \draw[<->] (1) to [bend left = 60] (2);
            \draw[->] (1) to [bend right = 45] (2);
        \end{tikzpicture} \\
         Added vertex &
         \begin{tikzpicture}[auto, node distance=1.5 cm, scale = 0.7, transform shape]
             \node[state] (x) {x};
             \node[state] (1) [right of = 1] {1};
             \node[state] (2) [right of = 2] {2};
             \draw[-] (x) to (1);
             \draw[-] (1) to [bend left] (2);
             \draw[-] (1) to [bend right] (2);

         \end{tikzpicture} 
    \end{tabular}\\ 
    \caption{$\tilde{A}_1$ and two indefinite algebras Built Upon It}
    \label{fig:A2likes}
\end{figure}
we will be able to encode binary sequences as follows:
\begin{figure}[h]
    \centering
        \begin{tikzpicture}[auto, node distance=1.5 cm, scale = 0.5, transform shape]
    \node[] (1) {\Huge{0}};
    \node[] (1b) [right of = 1] {};
    \node[] (2) [right of = 1b] {};
    \node[] (2b) [right of = 2] {};
    \node[] (3) [right of = 2b] {\Huge{0}};
    \node[] (3b) [right of = 3] {};
    \node[] (4) [right of = 3b] {};
    \node[] (4b) [right of = 4] {};
    \node[] (5) [right of = 4b] {\Huge{1}};
    \node[] (5b) [right of = 5] {};
    \node[] (6) [right of = 5b] {};
    \node[] (6b) [right of = 6] {};
    \node[] (7) [right of = 6b] {\Huge{1}};
    \node[] (7b) [right of = 7] {};

    \node[state] (a) [below of = 1] {1};
    \node[state] (b) [below of = 1b] {2};
    \node[state] (c) [below of = 2] {1};
    \node[state] (d) [below of = 2b] {2};
    \node[state] (e) [below of = 3] {1};
    \node[state] (f) [below of = 3b] {2};
    \node[state] (g) [below of = 4] {1};
    \node[state] (h) [below of = 4b] {2};
    \node[state] (i) [below of = 5] {1};
    \node[state] (j) [below of = 5b] {2};
    \node[state] (k) [below of = 6] {1};
    \node[state] (l) [below of = 6b] {2};
    \node[state] (m) [below of = 7] {1};
    \node[state] (n) [below of = 7b] {2};

    \node[state] (exj) [below of = j] {x};
  
    \node[state] (exn) [below of = n] {x};

    \node (link2) [below of = a] {};
    \node (link) [below of = link2] {\Huge{1}};

    \node[state] (xa) [below of = link] {1};
    \node[state] (xb) [right of = xa] {2};
    \node[state] (xc) [right of = xb] {1};
    \node[state] (xd) [right of = xc] {2};
    \node[state] (xe) [right of = xd] {1};
    \node[state] (xf) [right of = xe] {2};
    \node[state] (xg) [right of = xf] {1};
    \node[state] (xh) [right of = xg] {2};
    \node[state] (xi) [right of = xh] {1};
    \node[state] (xj) [right of = xi] {2};
    \node[state] (xk) [right of = xj] {1};
    \node[state] (xl) [right of = xk] {2};

    \node (R) [above of = xe] {\Huge{1}};
    \node (Q) [above of = xi] {\Huge{0}};

    \node[state] (lexb) [below of = xb] {2};
    \node[state] (lexf) [below of = xf] {2};

    \draw (a) to (b);
    \draw (b) to (c);
    \draw (c) to (d);
    \draw (d) to (e);
    \draw (e) to (f);
    \draw (f) to (g);
    \draw (g) to (h);
    \draw (h) to (i);
    \draw (i) to (j);
    \draw (j) to (k);
    \draw (k) to (l);
    \draw (l) to (m);
    \draw (m) to (n);

    \draw (xa) to (xb);
    \draw (xb) to (xc);
    \draw (xc) to (xd);
    \draw (xd) to (xe);
    \draw (xe) to (xf);
    \draw (xf) to (xg);
    \draw (xg) to (xh);
    \draw (xh) to (xi);
    \draw (xi) to (xj);
    \draw (xj) to (xk);
    \draw (xk) to (xl);

    \draw (i) to (exj);
    \draw (exj) to (k);
    \draw (m) to (exn);

    \draw (xa) to (lexb);
    \draw (lexb) to (xc);
    \draw (xe) to (lexf);
    \draw (lexf) to (xg);
    \end{tikzpicture}
    \caption{Corresponding Encodings of Binary Sequences}
    \label{fig:encodings}
\end{figure}
\newpage
\section{Quiver Representations for Finite Type Roots}\label{sec:finitequiver}
\subsection{$A_n$ Quivers}
The simplest case, as always, is the $A_n$ case. Indecomposable representations correspond to intervals of $1 ,2,3,...,n$ with the usual order. Given such a sequence $i, i+1,...,i+j$ and an orientation of each edge, set $V_m = \C$ with basis element $e_m$, and set $f_{\{m,m+1\}}e_m = e_{m+1}$ or $f_{\{m,m+1\}}e_{m+1} = e_{m}$ as appropriate to the choice of orientation. Such a representation is clearly indecomposable, as any nontrivial decomposition would have to put some $e_m$ and its image in unconnected subspaces. This corresponds precisely to the positive roots we find for the $A_n$ case, which can all be represented in our scheme by the subpaths of $A_n$.
\subsection{$D_n$ Quivers}
The indecomposable representations of the $D_n$ quiver correspond to specific fragments of the maximal root diagram for $D_n$. Namely, any connected fragment of the $D_n$ diagram itself, using coefficients of one as before, but if more than one vertex $n-2$ is present, both the $n-1$ and $n$ vertices must be as well (and the maps from each to the vector space generated by the $n-2$ vertices must be linearly independent as maps from $\C$ to that space), furthermore one `tail,' i.e. the connected component of the subgraph on $1,...,n-2$ for each instance of $n-2$, must be longer than the other. This requirement is manifest in the roots in that we require the initial segment to be longer than the final for the root to be nonzero.

\subsection{$E_6$, $E_7$, and $E_8$} 
The $E_n$ case is complicated enough that we are now going to affirmatively fix an orientation for each case, which will be shown in the diagrams, and further we will only diagram the quiver representations corresponding to the highest roots.

\subsubsection{$E_6$}
The highest root of $E_6$ has a representation which looks like
\begin{center}
\begin{tikzcd}
    \C \arrow[r, "h_2"] & \C^{2} \arrow[r, "h_1"] & \C^{3} \arrow[r, "g_1"] \arrow[d, "f_1"] & \C^{2} \arrow[r, "g_2"] &\C\\
   &  & \C^{2}  & & \\
\end{tikzcd}
\end{center}
generically, up to a choice of basis this is isomorphic to:
\begin{center}
\begin{tikzcd}
    \C \arrow[r, "Id\oplus 0"] & \C\oplus\C \arrow[r, "Id\oplus 0"] & \C\oplus\C\oplus\C \arrow[to =  1-6, "(\pi_1 + \pi_3)\oplus (\pi_2 + \pi_3)"] \arrow[d, "\pi_1\oplus \pi_2"] & & & \C\oplus \C \arrow[r, "\pi_1"] &\C\\
   &  & \C\oplus \C  & & \\
\end{tikzcd}
\end{center}
\newpage
\subsubsection{$E_7$}
The highest root of $E_7$ has representation:
\begin{center}
    \begin{tikzcd}
        \C \arrow[d, "f_1"] \\
        \C^{2} \arrow[d, "f_2"]\\ 
        \C^{3} \arrow[d, "f_3"] \\
        \C^{4} \arrow[d, "f_4"] \arrow[r, "g"] & \C^{2}\\ 
        \C^{3} \arrow[d, "f_4"] \\
        \C^{2} \arrow[d, "f_5"] \\

    \end{tikzcd}
\end{center}

and generically, after a choice of basis
\begin{center}
    \begin{tikzcd}
        \C \arrow[d, "Id\oplus 0"] \\
        \C\oplus \C \arrow[d, "Id \oplus 0"]\\ 
        \C\oplus \C\oplus \C \arrow[d, "Id \oplus 0"] \\
        \C\oplus \C\oplus \C\oplus \C  \arrow[d, "\pi_1 \oplus \pi_2 \oplus \pi_3"] \arrow[to = 4-4, "\pi_2 +\pi_3 + \pi_4 \oplus \pi_1 + \pi_4"] & & & \C\oplus \C\\ 
        \C\oplus\C\oplus \C \arrow[d, "\pi_1 \oplus \pi_2"] \\
        \C\oplus \C \\

    \end{tikzcd}
\end{center}

\subsubsection{$E_8$}
For the highest $E_8$ root we have:
\begin{center}
    \begin{tikzcd}
        \C^{2} \arrow[d, "f_1"]\\ 
        \C^{3} \arrow[d, "f_2"] \\
        \C^{4} \arrow[d, "f_3"] \\ 
        \C^{5} \arrow[d, "f_4"] \\
        \C^{6} \arrow[d, "g_1"] \arrow[r, "h"] & \C^{3}\\
        \C^{4} \arrow[d, "g_2"] \\
        \C^{2} &\\  
       
    \end{tikzcd}
\end{center}
and generically, a choice of basis gives:
\begin{center}
    \begin{tikzcd}
        \C\oplus\C \arrow[d, "Id\oplus 0"]\\ 
        \C\oplus\C\oplus\C \arrow[d, "Id\oplus 0"] \\
        \C\oplus\C\oplus\C\oplus\C \arrow[d, "Id\oplus 0"] \\ 
        \C\oplus\C\oplus\C\oplus\C\oplus\C \arrow[d, "Id\oplus 0"] \\
        \C\oplus\C\oplus\C\oplus\C\oplus\C\oplus\C \arrow[d, "\pi_1\oplus \pi_2 \oplus \pi_3 \oplus\pi_4"] \arrow[r, "h"] & \C\oplus\C\oplus\C\\
        \C\oplus\C\oplus\C\oplus\C \arrow[d, "\pi_1\oplus\pi_2 "] \\
        \C\oplus\C &\\  
       
    \end{tikzcd}
\end{center}
Where \[h = \begin{pmatrix}
    1 & 0 & 0 & 0 & 1 & 1\\
    0 & 1 & 0 & 1 & 1 & 0 \\
    0 & 0 & 1 & 0 & 1 & 0\\
\end{pmatrix}\]

\section{Quiver Representations for the Affine ADE Roots}\label{sec:affinequiver}
\subsection{$\tilde{A}_n$}
The imaginary roots of the $\tilde{A}_n$ algebras have associated quivers with the structure
\begin{center}
    \begin{tikzcd}
               & \C^n \arrow[r, "f_1"]& \hdots \arrow[r, "f_i"]& \C^n \arrow [dr, "f_{i+1}"]& \\
        \C^n \arrow[ur, "f_{n}"]  &      &        &      & \C^n\arrow[d, "f_{i+2}"]\\
        \vdots\arrow[u, "f_{n-2}"]  &      &        &      & \vdots\arrow[d, "f_{k-1}"]\\
        \C^n\arrow[u, "f_{j+1}"]   &      &        &      & \C^n\arrow[dl, "f_k"]\\
               & \C^n\arrow[ul, "f_{j}"] & \hdots\arrow[l, "f_{j-1}"] & \C^n\arrow[l, "f_{k+1}"] & \\
    \end{tikzcd}
\end{center}
We assume generic ranks for all maps and their compositions (namely that each map $f_i$ should be invertible, we may then select a basis such that the quiver representation is isomorphic to:
\begin{center}
    \begin{tikzcd}
               & \C^n \arrow[r, "Id"]& \hdots \arrow[r, "Id"]& \C^n \arrow [dr, "Id"]& \\
        \C^n \arrow[ur, "J"]  &      &        &      & \C^n\arrow[d, "Id"]\\
        \vdots\arrow[u, "Id"]  &      &        &      & \vdots\arrow[d, "Id"]\\
        \C^n\arrow[u, "Id"]   &      &        &      & \C^n\arrow[dl, "Id"]\\
               & \C^n\arrow[ul, "Id"] & \hdots\arrow[l, "Id"] & \C^n\arrow[l, "Id"] & \\
    \end{tikzcd}
\end{center}
Where $J$ is any matrix in Jordan normal form. Generically this matrix is diagonal, and if we want the representation to be indecomposible $J$ is required to be a single Jordan block. This Jordan block structure is generically independent of orientation, as generically all of the maps will be invertible and thus one can work with the inverse rather than the map itself, making orientation arbitrary. If $J$ is a single Jordan block, it is isomorphic to some $J_\lambda$, and thus in the visual calculus for quiver representations we would find something which looks like:

\begin{figure}[h!]\label{fig:A_n affine quiver}
    \centering
    \begin{tikzpicture}[auto, node distance=2cm, transform shape, bend angle = 90]
                \node (dots1) [] {...};
                \node[state] (1) [right of = dots1] {1};
                \node[state] (2) [right of = 1] {2};
                \node (dots2) [right of = 2] {...};
                \node[state] (n) [right of = dots2] {n};
                \node[state] (1two) [right of = n] {1};
                \node (dots3) [right of = 1two] {...};

                \draw [arrows = {-Stealth[scale=2]}]  (dots1) -- (1) node[midway, above ]{1};
                \draw [arrows = {-Stealth[scale=2]}]  (1) -- (2) node[midway, above ]{1};
                \draw [arrows = {-Stealth[scale=2]}]  (2) -- (dots2) node[midway, above ]{1};
                \draw [arrows = {-Stealth[scale=2]}]  (dots2) -- (n) node[midway, above ]{1};
                \draw [arrows = {-Stealth[scale=2]}]  (n) -- (1two) node[midway, above ]{1};
                \draw [arrows = {-Stealth[scale=2]}]  (1two) -- (dots3) node[midway, above ]{1};
                \draw [arrows={-Stealth[scale=2]}, bend right]
  (n) to node[midway, above] {$\lambda$} (1);

        \end{tikzpicture}
    \caption{The indecomposible imaginary $\tilde{A}_n$ Representations}
\end{figure}
The only loops in this graph are caused by nonzero values of the parameter $\lambda$, and thus the representation becomes tree-like if this is parameter is taken to be zero. In general, the representative diagrams of positive roots of $\tilde{A_n}$ correspond to tree like diagrams of quivers. This process also shows a birational equivalence (for definitions the standard reference is probably Hartshorne \cite{hartshorne}, a much more on topic and detailed account of this point of view as applies to Nakajima's quiver varieties is \cite{bellamy2025birationalgeometryquivervarieties}) between the spaces of representations, with imaginary dimension vector with uniform value $k$, for arbitrary $\tilde{A}_n$, and the Jordan quiver which classifies maps $V\rightarrow V$ up to a choice of basis. This will be a recurring theme as we discuss more affine diagrams. 

\subsection{$\tilde{D}_n$ Quivers}
The imaginary roots of the $\tilde{D}_n$ algebras have the structure
\begin{center}
    \begin{tikzcd}
        \C^n \arrow[dr, "a"] &        &    &         & \C^n\\
             &\C^{2n} \arrow[r, "f_1"]&... \arrow[r, "f_{n-4}"] & \C^{2n}\arrow[ur, "b"]\arrow[dr, "d"] & \\
        \C^n\arrow[ur, "c"] &        &    &         & \C^n\\
    \end{tikzcd}
    
\end{center}
We assume generic ranks of all maps and compositions, we can choose a basis so that the resulting quiver representation is isomorphic to

\begin{center}
    \begin{tikzcd}
        \C^n \arrow[dr, "Id \oplus 0"] &        &    &         & \C^n\\
             &\C^{n}\oplus\C^n \arrow[r, "Id"]&... \arrow[r, "Id"] & \C^{n}\oplus \C^n\arrow[ur, "\pi_1 + \pi_2"]\arrow[dr, "\pi_1+J\circ\pi_2"] & \\
        \C^n\arrow[ur, "0\oplus Id"] &        &    &         & \C^n\\
    \end{tikzcd}
    
\end{center}
Again, if $J$ is a single Jordan block it will be an indecomposible representation, but the generic representation will have $J$ diagonal. If we have $J = J_\lambda$, the space of representations will look like:

\begin{figure}[h!]\label{fig:D_n affine quiver}
    \centering
    \begin{tikzpicture}[auto, node distance=2cm, transform shape, bend angle = 90]
                \node[state] (1) [] {1};
                \node[state] (2) [below of = 1] {2};
                \node[state] (3one) [right of = 1] {3};
                \node[state] (3two) [right of = 2] {3};
                \node (dots1) [right of = 3one] {...};
                \node (dots2) [right of = 3two] {...};
                \node[state] (n-11) [right of = dots1] {n-1};
                \node[state] (n-12) [right of = dots2] {n-1};
                \node[state] (n+1) [right of = n-11] {n+1};
                \node[state] (n) [below right of = n-12,xshift = 2cm] {n};

                \draw [arrows = {-Stealth[scale=2]}]  (1) -- (3one) node[midway, above ]{1};
                \draw [arrows = {-Stealth[scale=2]}]  (2) -- (3two) node[midway, above ]{1};
                \draw [arrows = {-Stealth[scale=2]}]  (3one) -- (dots1) node[midway, above ]{1};
                \draw [arrows = {-Stealth[scale=2]}]  (3two) -- (dots2) node[midway, above ]{1};
                \draw [arrows = {-Stealth[scale=2]}]  (dots1) -- (n-11) node[midway, above ]{1};
                \draw [arrows = {-Stealth[scale=2]}]  (dots2) -- (n-12) node[midway, above ]{1};
                \draw [arrows = {-Stealth[scale=2]}]  (n-12) -- (n+1) node[midway, above ]{1};
                \draw [arrows = {-Stealth[scale=2]}]  (n-11) -- (n+1) node[midway, above ]{1};

                \node (n+1prev) [below left of = n] {...};
                \node[state] (nnext) [above of = n+1] {n};
                \node (n-1next) [above left of = nnext, xshift = -2cm] {...};
                \node (n-1next2) [above of = n-1next] {...};
                \draw [arrows = {-Stealth[scale=2]}]  (n-12) -- (n) node[midway, above ]{1};
                \draw [arrows = {-Stealth[scale=2]}]  (n-11) -- (n) node[midway, above ]{$\lambda$};
                \draw [arrows = {-Stealth[scale=2]}]  (n+1prev) -- (n) node[midway, above ]{1};
                \draw [arrows = {-Stealth[scale=2]}]  (n-11) -- (nnext) node[midway, above ]{1};
                \draw [arrows = {-Stealth[scale=2]}]  (n-1next) -- (nnext) node[midway, above ]{$\lambda$};
                \draw [arrows = {-Stealth[scale=2]}]  (n-1next2) -- (nnext) node[midway, above ]{1};

        \end{tikzpicture}
    \caption{The indecomposible imaginary $\tilde{D}_n$ Representations}
\end{figure}
This particular choice of parameterization does not clearly correspond to the iterable pattern we saw in \ref{fig:twospecial}, however when the parameter $\lambda$ is set to zero it may be unfolded and put in order as a related pattern starting with the vertex $n$ instead of $3$. This presentation was chosen for the sake of managing space on the page. Because the whole representation (generically) is parameterized by $J$, the space of such representations is \textit{also} birationally equivalent to the Jordan quiver.
\subsection{$\tilde{E}_n$ Quivers}
\paragraph{Quivers for the $\tilde{E}_6$ Imaginary Roots}
We have a quiver of the form:\\
\begin{center}
\begin{tikzcd}
   \C^n \arrow[r, leftarrow, "h_2"] & \C^{2n} \arrow[r, leftarrow, "h_1"] & \C^{3n} \arrow[r, "g_1"] \arrow[d, "f_1"] & \C^{2n} \arrow[r, "g_2"] &\C^n\\
   &  & \C^{2n} \arrow[d, "f_2"] & & \\
   &  & \C^n
\end{tikzcd}
\end{center}
Which, after making generic choices for ranks of all maps, a basis can be chosen to put it in the form:\\
\begin{center}
\begin{tikzcd}
   \C^n \arrow[r, leftarrow, "\pi_1 + \pi_2"] & \C^{n}\oplus \C^n \arrow[r, leftarrow, "\pi_2\oplus \pi_3"] & \C^n\oplus\C^n\oplus\C^n \arrow[r, "\pi_1\oplus \pi_2"] \arrow[d, "\pi_1 \oplus \pi_3"] & \C^{n}\oplus \C^n \arrow[r, "\pi_1 + \pi_2"] &\C^n\\
   &  & \C^{n}\oplus \C^n \arrow[d, "\pi_1 + J\circ\pi_2"] & & \\
   &  & \C^n
\end{tikzcd}
\end{center}
Where $\pi_i$ is always projection on to the $i$-th part, and $J$ is any matrix in Jordan normal form. Generically this matrix will be diagonal, and so the representation will be decomposible. The indecomposible case occurs if and only if $J$ comprises a single Jordan block $J_\lambda$. The graphical calculus notation for this is 

\begin{figure}[h!]\label{fig:E_6 affine}
    \centering
    \begin{tikzpicture}[auto, node distance=2cm, transform shape, bend angle = 90]
                \node[state] (1) [] {1};
                \node[state] (2one) [below of = 1] {2};
                \node[state] (2two) [right of = 1] {2};
                \node[state] (3top) [right of = 2two] {3};
                \node[state] (4one) [right of = 3top] {4};
                \node[state] (5) [right of = 4one] {5};
                \node[state] (4two) [below of = 5] {4};

                \node[state] (3two) [below of = 2one] {3};
                \node[state] (xone) [below of = 3two] {x};
                \node[state] (yone) [below of = xone] {y};
                \node[state] (3three) [below  of = 4two] {3};
                \node[state] (xtwo) [below of = 3three] {x};
                
                \node[state] (ynext) [below right of = xtwo, xshift = 1cm, yshift = -5mm] {y};
                \node (xlast) [above left of = yone, xshift = -1cm, yshift = 5mm] {...};
                \node (xnext) [above of = ynext] {...};
                \node (xnext2) [right of = ynext, yshift = 4mm] {...};

                \draw [arrows = {-Stealth[scale=2]}]  (2one) -- (1) node[midway, right ]{1};
                \draw [arrows = {-Stealth[scale=2]}]  (2two) -- (1) node[midway, above ]{1};
                \draw [arrows = {-Stealth[scale=2]}]  (3top) -- (2two) node[midway, above ]{1};
                \draw [arrows = {-Stealth[scale=2]}]  (3top) -- (4one) node[midway, above ]{1};
                \draw [arrows = {-Stealth[scale=2]}]  (4one) -- (5) node[midway, above ]{1};
                \draw [arrows = {-Stealth[scale=2]}]  (3two) -- (2one) node[midway, right ]{1};
                \draw [arrows = {-Stealth[scale=2]}]  (3two) -- (xone) node[midway, right ]{1};
                \draw [arrows = {-Stealth[scale=2]}]  (xone) -- (yone) node[midway, right ]{1};
                \draw [arrows = {-Stealth[scale=2]}]  (xtwo) -- (yone) node[midway, above ]{$\lambda$};
                \draw [arrows = {-Stealth[scale=2]}]  (4two) -- (5) node[midway, right ]{1};
                \draw [arrows = {-Stealth[scale=2]}]  (3three) -- (4two) node[midway, right ]{1};
                \draw [arrows = {-Stealth[scale=2]}]  (3three) -- (xtwo) node[midway, right ]{1};
                \draw [arrows = {-Stealth[scale=2]}]  (xtwo) -- (ynext) node[midway, above ]{1};
                \draw [arrows = {-Stealth[scale=2]}]  (xlast) -- (yone) node[midway, above ]{1};
                \draw [arrows = {-Stealth[scale=2]}]  (xnext) -- (ynext) node[midway, right ]{1};
                \draw [arrows = {-Stealth[scale=2]}]  (xnext2) -- (ynext) node[midway, above ]{$\lambda$};

        \end{tikzpicture}
    \caption{The indecomposible imaginary $\tilde{E}_6$ Representations}
\end{figure}
\newpage
This bears very little resemblance to any form of the iterable pattern for $\tilde{E}_6$ affine that the author is aware of, but does have the same numbers of generators. We see yet again, however, as again $J$ parameterizes the space of (generic) representations, that the space of these representations is birational to that of the Jordan quiver.
\paragraph{Quivers for the $\tilde{E}_7$ Imaginary Roots}
For imaginary roots, we choose a quiver of the form:
\begin{center}
    \begin{tikzcd}
        \C^{n} \arrow[d, "f_1"] \\
        \C^{2n} \arrow[d, "f_2"]\\ 
        \C^{3n} \arrow[d, "f_3"] \\
        \C^{4n} \arrow[d, "f_4"] \arrow[r, "g"] & \C^{2n}\\ 
        \C^{3n} \arrow[d, "f_4"] \\
        \C^{2n} \arrow[d, "f_5"] \\
        \C^{n} &\\  
       
    \end{tikzcd}
\end{center}
Making generic choices for ranks of maps allows us to choose a basis putting the quiver in the form:
\begin{center}
    \begin{tikzcd}
        \C^{n} \arrow[d, "Id\oplus 0"] \\
        \C^{n}\oplus \C^n \arrow[d, "Id\oplus 0"]\\ 
        \C^{n}\oplus \C^n\oplus \C^n \arrow[d, "Id\oplus 0"] \\
        \C^{n}\oplus \C^n\oplus \C^n\oplus \C^n \arrow[d, "\pi_1\oplus \pi_2 \oplus \pi_3"] \arrow[r, "g"] & \C^{n}\oplus \C^n\\ 
        \C^{n}\oplus \C^n\oplus \C^n \arrow[d, "\pi_1\oplus\pi_2"] \\
        \C^{n}\oplus \C^n \arrow[d, "\pi_1"] \\
        \C^{n} &\\  
       
    \end{tikzcd}
\end{center}
And we give $g$ in block form as

\[
g = \begin{pmatrix}
    J & Id & 0 & Id\\
    Id & Id & Id & 0
\end{pmatrix}
\]
Where $J$ is in Jordan form. Again generically this means $J$ is diagonal and indecomposible means it is one Jordan block. If that Jordan block is $J\lambda$, we see:
\begin{figure}[h!]\label{fig:E_7 affine quiver}
    \centering
    \begin{tikzpicture}[auto, node distance=2cm, transform shape, bend angle = 90]
                \node[state] (xone) [] {x};
                \node[state] (xtwo) [right of = xone, xshift = 3cm] {x};
                \node[state] (xnext) [right of = xtwo, xshift = 3cm] {x};

                \node[state](4one) [below of = xone] {4};
                \node[state](4three) [right of = 4one, xshift = 5mm] {4};
                \node[state] (4two) [right of = 4three, xshift = 5mm] {4};
                \node[state] (4four) [above of = xtwo] {4};
                \draw [arrows = {-Stealth[scale=2]}]  (4one) -- (xone) node[midway, above ]{1};
                \node[state] (5one) [below of = 4two, xshift = 6mm] {5};
                \node[state] (3one) [below of = 4two, xshift = -6mm] {3};
                \draw [arrows = {-Stealth[scale=2]}]  (3one) -- (4two) node[midway, left ]{1};
                \draw [arrows = {-Stealth[scale=2]}]  (4two) -- (5one) node[midway, above ]{1};
                \draw [arrows = {-Stealth[scale=2]}]  (4two) -- (xtwo) node[midway, above ]{1};
                \node[state] (5two) [below of = 4three, xshift = 6mm] {5};
                \node[state] (3two) [below of = 4three, xshift = -6mm] {3};
                \node[state] (2two) [below of = 3two] {2};
                \node[state] (6two) [below of = 5two] {6};
                \draw [arrows = {-Stealth[scale=2]}]  (2two) -- (3two) node[midway, left ]{1};
                \draw [arrows = {-Stealth[scale=2]}]  (3two) -- (4three) node[midway, left ]{1};
                \draw [arrows = {-Stealth[scale=2]}]  (4three) -- (5two) node[midway, left ]{1};
                \draw [arrows = {-Stealth[scale=2]}]  (5two) -- (6two) node[midway, left ]{1};
                \draw [arrows = {-Stealth[scale=2]}]  (4three) -- (xone) node[midway, above ]{1};
                \draw [arrows = {-Stealth[scale=2]}]  (4three) -- (xtwo) node[midway, above ]{1};
                \node[state] (5three) [above right of = 4four] {5};
                \node[state] (3three) [above left of = 4four] {3};
                \node[state] (2three) [above of = 3three] {2};
                \node[state] (6three) [above of = 5three] {6};
                \node[state] (1) [above of = 2three] {2};
                \node[state] (7) [above of = 6three] {7};
                \draw [arrows = {-Stealth[scale=2]}]  (1) -- (2three) node[midway, right ]{1};
                \draw [arrows = {-Stealth[scale=2]}]  (2three) -- (3three) node[midway, right ]{1};
                \draw [arrows = {-Stealth[scale=2]}]  (3three) -- (4four) node[midway, above ]{1};
                \draw [arrows = {-Stealth[scale=2]}]  (4four) -- (5three) node[midway, above ]{1};
                \draw [arrows = {-Stealth[scale=2]}]  (5three) -- (6three) node[midway, right ]{1};
                \draw [arrows = {-Stealth[scale=2]}]  (6three) -- (7) node[midway, above ]{1};
                \draw [arrows = {-Stealth[scale=2]}]  (4four) -- (xone) node[midway, above ]{$\lambda$};
                \draw [arrows = {-Stealth[scale=2]}]  (4four) -- (xtwo) node[midway, right ]{1};
                \draw [arrows = {-Stealth[scale=2]}]  (4four) -- (xnext) node[midway, above ]{1};
                \node (4last) [above left of = xone, yshift = -1cm] {...};
                \draw [arrows = {-Stealth[scale=2]}]  (4last) -- (xone) node[midway, above ]{1};
                \node (4onenext) [below of = xnext] {...};
                \draw [arrows = {-Stealth[scale=2]}]  (4onenext) -- (xnext) node[midway, right ]{1};
                \node (4twonext) [right of = 4onenext] {...};
                \draw [arrows = {-Stealth[scale=2]}]  (4twonext) -- (xnext) node[midway, above ]{1};
                \node (4fournext) [above right of = xnext, yshift = -1cm] {...};
                \draw [arrows = {-Stealth[scale=2]}]  (4fournext) -- (xnext) node[midway, above ]{$\lambda$};

        \end{tikzpicture}
    \caption{The indecomposible imaginary $\tilde{E}_7$ Representations}
\end{figure}
As we have seen with the $\tilde{E}_6$ quiver, we again have a birational equivalence with representations of the Jordan quiver. Furthermore, we once again have a structure which bears little resemblance (beyond multiplicities) to the iterable element in the associated Lie algebra.
\newpage
\paragraph{Quivers for the $\tilde{E}_8$ Imaginary Roots}
The quiver we choose is of the form
\begin{center}
    \begin{tikzcd}
        \C^{n} \arrow[d, "f_1"] \\
        \C^{2n} \arrow[d, "f_2"]\\ 
        \C^{3n} \arrow[d, "f_3"] \\
        \C^{4n} \arrow[d, "f_4"] \\ 
        \C^{5n} \arrow[d, "f_5"] \\
        \C^{6n} \arrow[d, "g_1"] \arrow[r, "h"] & \C^{3n}\\
        \C^{4n} \arrow[d, "g_2"] \\
        \C^{2n} &\\  
       
    \end{tikzcd}
\end{center}
Once more, generic choices for ranks and specific choices of basis allow us to put the quiver in the form:
\begin{center}
    \begin{tikzcd}
        \C^{n} \arrow[d, "Id\oplus 0"]\\
        \C^{n}\oplus \C^n \arrow[d, "Id\oplus 0"]\\ 
        \C^{n}\oplus \C^n\oplus \C^n \arrow[d, "Id\oplus 0"] \\
        \C^{n}\oplus \C^n\oplus \C^n\oplus \C^n \arrow[d, "Id\oplus 0"] \\ 
        \C^{n}\oplus \C^n\oplus \C^n\oplus \C^n\oplus \C^n \arrow[d, "Id\oplus 0"] \\
        \C^{n}\oplus \C^n\oplus \C^n\oplus \C^n\oplus \C^n\oplus \C^n \arrow[d, "\pi_1\oplus \pi_2 \oplus \pi_3  \oplus \pi_4"] \arrow[r, "h"] & \C^{n}\oplus \C^n\oplus \C^n\\
        \C^{n}\oplus \C^n\oplus \C^n\oplus \C^n \arrow[d, "\pi_1\oplus \pi_2"] \\
        \C^{n}\oplus \C^n  
       
    \end{tikzcd}
\end{center}
Where $h$ is given in block form by
\[h = 
\begin{pmatrix}
 Id & Id & Id & 0  & 0  & 0 \\
 Id & 0  & 0  & 0  & Id & Id  \\
 0  & 0  & Id & Id & J  & 0 \\
\end{pmatrix}
\]
Where $J$ again is any $n\times n$ matrix in Jordan form. Again, the representation is indecomposible iff $J$ is a single Jordan block $J_{\lambda}$, and again $J$ is generically diagonal. In our graphical calculus, we obtain

\begin{figure}[h!]\label{fig:E_6 affine quiver}
    \centering
    \begin{tikzpicture}[auto, node distance=2cm, scale = 0.5, transform shape, bend angle = 90]
               \node[state] (x1) [] {x};
               \node[state] (53) [above left of = x1] {5};
               \node[state] (54) [above right of = x1] {5};
               \node[state] (x2) [above right of = 54]  {x};
               \node[state] (55) [above left of = x2] {5};
               \node[state] (56) [right of = x2, xshift  = 2cm] {5};
               \node[state] (x3) [below right of = 56] {x};
               \node[state] (51) [above right of = x3] {5};
               \node[state] (52) [below right of = x3] {5};

               \draw [arrows = {-Stealth[]}]  (53) -- (x1) node[midway, above ]{1};
               \draw [arrows = {-Stealth[]}]  (54) -- (x1) node[midway, above ]{1};
               \draw [arrows = {-Stealth[]}]  (54) -- (x2) node[midway, above ]{1};
               \draw [arrows = {-Stealth[]}]  (55) -- (x2) node[midway, above ]{1};
               \draw [arrows = {-Stealth[]}]  (56) -- (x2) node[midway, above ]{1};
               \draw [arrows = {-Stealth[]}]  (56) -- (x3) node[midway, above ]{1};
               \draw [arrows = {-Stealth[]}]  (51) -- (x3) node[midway, above ]{1};
               \draw [arrows = {-Stealth[]}]  (52) -- (x3) node[midway, above ]{1};
               \draw [arrows = {-Stealth[]}]  (52) -- (x1) node[midway, above ]{$\lambda$};

               \node (5last) [left of = x1, xshift = -4cm] {...};
               \node (xnext) [right of = 52, xshift = 1cm] {...};
                \draw [arrows = {-Stealth[]}]  (52) -- (xnext) node[midway, above ]{1};
                 \draw [arrows = {-Stealth[]}]  (5last) -- (x1) node[midway, above ]{1};

               \node[state] (46) [above left of = 56] {4};
               \node[state] (36) [above left of = 46] {3};
               \node[state] (26) [above of = 36] {2};
               \node[state] (16) [above of = 26] {1};
               \node[state] (06) [above of = 16] {0};
               \node[state] (66) [above right of = 56] {6};
               \node[state] (76) [above right of = 66] {7};

               \draw [arrows = {-Stealth[]}]  (06) -- (16) node[midway, right ]{1};
               \draw [arrows = {-Stealth[]}]  (16) -- (26) node[midway, right ]{1};
               \draw [arrows = {-Stealth[]}]  (26) -- (36) node[midway, right ]{1};
               \draw [arrows = {-Stealth[]}]  (36) -- (46) node[midway, above ]{1};
               \draw [arrows = {-Stealth[]}]  (46) -- (56) node[midway, above ]{1};
               \draw [arrows = {-Stealth[]}]  (56) -- (66) node[midway, above ]{1};
               \draw [arrows = {-Stealth[]}]  (66) -- (76) node[midway, above ]{1};

               \node[state] (45) [above left of = 55] {4};
               \node[state] (35) [above left of = 45] {3};
               \node[state] (25) [above left of = 35] {2};
               \node[state] (15) [above left of = 25] {1};
               \node[state] (65) [above of = 55] {6};
               \node[state] (75) [above of = 65] {7};

               \draw [arrows = {-Stealth[]}]  (15) -- (25) node[midway, above ]{1};
               \draw [arrows = {-Stealth[]}]  (25) -- (35) node[midway, above ]{1};
               \draw [arrows = {-Stealth[]}]  (35) -- (45) node[midway, above ]{1};
               \draw [arrows = {-Stealth[]}]  (45) -- (55) node[midway, above ]{1};
               \draw [arrows = {-Stealth[]}]  (55) -- (65) node[midway, right ]{1};
               \draw [arrows = {-Stealth[]}]  (65) -- (75) node[midway, right ]{1};

               \node[state] (44) [above left of = 54] {4};
               \node[state] (34) [above left of = 44] {3};
               \node[state] (24) [above left of = 34] {2};
               \node[state] (64) [right of = 54] {6};
               
               \draw [arrows = {-Stealth[]}]  (24) -- (34) node[midway, above ]{1};
               \draw [arrows = {-Stealth[]}]  (34) -- (44) node[midway, above ]{1};
               \draw [arrows = {-Stealth[]}]  (44) -- (54) node[midway, above ]{1};
               \draw [arrows = {-Stealth[]}]  (54) -- (64) node[midway, above ]{1};

               \node[state] (43) [left of = 53] {4};
               \node[state] (33) [above left of = 43] {3};
               \node[state] (63) [above left of = 53] {6};
               
               \draw [arrows = {-Stealth[]}]  (33) -- (43) node[midway, above ]{1};
               \draw [arrows = {-Stealth[]}]  (43) -- (53) node[midway, above ]{1};
               \draw [arrows = {-Stealth[]}]  (53) -- (63) node[midway, above ]{1};

               \node[state] (42) [above right of = 52] {4};
               \draw [arrows = {-Stealth[]}]  (42) -- (52) node[midway, above ]{1};

        \end{tikzpicture}
    \caption{The indecomposible imaginary $\tilde{E}_8$ Representations}
\end{figure}
\newpage
Of course, we again immediately obtain that the space of such representations is birationally equivalent to those of the Jordan quiver. In this case the resultant iterable pattern also does not appear to be related to that for the associated algebra. With that complete, the details of this section may be synthesized into the general proposition:

\begin{prop}\label{thm:affineloop}
    The Affine Dynkin diagrams of type $ADE$ have spaces of representations for the imaginary roots $nr$ (where $r$ is the smallest positive imaginary root), which are birationally equivalent to the space of representations of the Jordan quiver when it is assigned a vector space of rank $n$.
\end{prop}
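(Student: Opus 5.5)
The plan is to assemble the case-by-case normal forms of this section into a single argument. The birational map will go from the moduli of representations of dimension $nr$ to the moduli of representations of the Jordan quiver with $V=\C^n$, i.e.\ to matrices $J\in \mathrm{End}(\C^n)$ up to conjugation.

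The types $\tilde{A}_m$, $\tilde{D}_m$, $\tilde{E}_6$, $\tilde{E}_7$ and $\tilde{E}_8$ exhaust the affine $ADE$ diagrams, so I would treat them one at a time with the following scheme. First I fix, as in each subsection, an orientation and a Zariski open subset $U$ of the representation space $\mathrm{Rep}(Q,nr)$. On $U$ every arrow, and every composite along paths used in the normalization, has maximal (generic) rank. On $U$ the rank conditions determine canonical flags or splittings at each vertex: images, kernels, and intersections of images. These decompose each $\C^{kn}$ as a direct sum of $k$ copies of $\C^n$.

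Second, using these splittings I choose bases that bring every map to the displayed normal form. In that form all blocks are $Id$, $0$ or projections, except a single block $J$. For $\tilde{A}_m$, $J$ is the composite around the cycle. For $\tilde{D}_m$ and $\tilde{E}_k$, $J$ is the one block left over once the other blocks have been normalized. I need to check two things. The normalizing change of basis should be unique up to a simultaneous $GL_n$ acting diagonally on all the $\C^n$ summands. That residual action should act on $J$ by conjugation. Together these give a well-defined morphism $U/\prod GL \to \mathrm{End}(\C^n)/GL_n$, $[\rho]\mapsto[J]$.

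Third, the inverse is obvious: send $J$ to the normal-form representation containing $J$. This is a morphism from $\mathrm{End}(\C^n)$, equivariant for conjugation, and the two maps compose to the identity on the open sets where the rank conditions hold. For the Jordan side, that open set is the nonempty open set of $J$ for which the normal form lies in $U$. For example, in the $\tilde{D}$ case this requires $J$ invertible with $J-1$ invertible, since the maps $\pi_1+\pi_2$ and $\pi_1+J\pi_2$ must be in general position. Since both the orientation change (reflection functors, or inverting invertible maps as in the $\tilde{A}$ discussion) and the choice of $U$ are birational operations, the result holds for any orientation. It also holds for either notion of moduli, whether invariant-theoretic quotients or generic geometric quotients.

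The main obstacle is the second step for the star-shaped cases, especially $\tilde{E}_8$. There I must verify that the generic rank conditions really force the residual automorphism group to be exactly diagonal $GL_n$. Equivalently, no further normalization of $J$ is possible, and the stabilizer computation works for all $n$, not just $n=1$. My approach is to reduce to the subspace-configuration viewpoint. On $U$ a representation of the star is the same as a configuration of flags in the central space $\C^{hn}$, where $h$ is the multiplicity of $r$ at the central node. I would then count dimensions. Since $\langle nr,nr\rangle=0$, the quotient has dimension $n^2$ minus the dimension of the generic stabilizer of $J$. This matches $\dim \mathrm{End}(\C^n)/\!/GL_n$ once the stabilizers agree.

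So it remains to show that the stabilizer of the normal form equals the centralizer of $J$. I would do this by writing out the block equations $F_{t(e)}\rho_e=\rho_eF_{s(e)}$. The $Id$ blocks force all the $F$'s to be equal to a single $g\in GL_n$, and the $J$ block then forces $gJ=Jg$.
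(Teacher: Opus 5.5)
Your proposal follows the paper's route: type-by-type generic normal forms in which every block is $Id$, $0$ or a projection except a single block $J$, so that a generic representation of dimension $nr$ is determined by $J$ up to conjugation. Your block-equation check that the residual group is the diagonal $GL_n$, acting on $J$ by conjugation, supplies the well-definedness the paper leaves implicit; for $\tilde{E}_8$ that check also needs $J$ invertible to kill off-diagonal blocks, which is harmless on the open set, so some of your side remarks need correcting. For acyclic orientations (in particular every orientation of $\tilde{D}$ and $\tilde{E}$) the invariant-theoretic quotient $\mathrm{Rep}/\!/\prod GL$ is a point, so the statement holds for generic, rational or stacky quotients but not for that one, and since $\dim \mathrm{Rep}(Q,nr)=\dim \prod GL$ the generic quotient has dimension equal to the generic stabilizer dimension $n$, not $n^2$ minus it.
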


Each imaginary root has a minimal imaginary root which divides it. For these, the actual indecomposible representation is described by a diagram which has a single loop, and a parameter which describes the coefficient of going around that loop. In general, any of the edges on the loop may be set to contain that parameter, and the rest can be set to one. One obtains tree like representations by setting that parameter to zero and thus breaking the loop. Tree like representations obtained in that way are called sub-generic \label{def:subgeneric}, as they are no longer technically generic by virtue of rank. Starting from a generic representation, a new representation may be obtained for any integer $n$ by isolating an edge, forming $n$ copies of the tree and adding edges between the $n$-th copy of the source of the isolated edge and the $n+1$-th copy of the target. This representation is indecomposable and is the generic indecomposable for the imaginary root given by $n$ times the minimal one. The number of tree-like sub-generic indecomposables obtained from them grows polynomially in $n$.
$$ dim(Im (g))\cap \mathrm{span}\{e_1,...,e_r\}) = \max (r+n - k,0)$$


\section{Binary Coding in the Quiver Setting}\label{sec:binaryquiver}
There are two ways in which the binary coding argument replicates itself in the setting of quiver representations. The first is by counting dimensions of representation spaces, and the second by constructing families of tree-like representations. The proposition \ref{prop:affinesareasplit} is true when restricted to the simply laced diagrams, so we will restrict our attention to the least complicated simply laced diagrams extending those. Our attention will therefore be on those which extend affine $ADE$ diagrams by a single vertex connected by a single edge. For the sake of convenience, we will assume that this edge is oriented out from the new vertex. More classical candidate notions might be hyperbolicity (the diagram containing no proper induced subgraphs which are not finite or affine type) or Minkowskian signature of the (diagonalized) Cartan matrix, but these are less informative areas of extension as our tools are based on the combinatorial order $\leq_c$, which reduces to the subgraph relation in the case of simply laced diagrams. There is some overlap, in particular the algebras $\tilde{A}_n^\wedge$, $\tilde{D}_n^\wedge$, and $\tilde{E}_n^\wedge$ obtained by adjoining that additional vertex to the affine vertex are hyperbolic for low values of $n$. A complete list of hyperbolic Kac-Moody algebras and their diagrams may be found in Carbone et al. \cite{Carbone_2010}. Both the dimension counting and tree-like representation constructions rely on the birational equivalences between the representation space and the representation space of the Jordan quiver. In particular, we will work with the case that the ranks of all vector spaces in the original quiver are those corresponding to the imaginary root, with the new node denoted by $x$, the space of representations under consideration will thus be equivalent to the representation space of a quiver with two vertices $x$ and $J$, a set of $n$ arrows from $x$ to $J$, and an arrow looped from $J$ to itself. It may look like:
\begin{figure}[h]\label{fig:Jordan-n-quiver}
    \centering
    \begin{tikzpicture}[auto, node distance=2cm, transform shape]
            \node[state] (1) {x};
                \node[state] (2) [right of = 1] {J};

                \draw [arrows = {-Stealth[scale=2]}]  (1)  to[bend left] (2);
                \draw [arrows = {-Stealth[scale=2]}]  (1) to[bend right] node[yshift = 1mm] {$\vdots$} (2);
               \draw (2) edge[
    -{Stealth[length=3mm,width=2mm]},
    loop right,
    min distance=15mm,
    looseness=8
  ] (2);

        \end{tikzpicture}
    \caption{A modified Jordan quiver the addition of a single vertex to an affine quiver}
\end{figure}\\
Positive imaginary roots for the affine algebras are all positive integer multiples of fixed smallest imaginary positive roots. If $x$ is connected to a vertex in the affine diagram which is associated with a generator $j$, the number of arrows $n$ from $x$ to $J$ in the modified Jordan quiver will be equal to the multiplicity of the generator $j$ in the smallest imaginary positive root in that direction. This will also be the number of Jordan blocks into which the associated vector space may be divvied, while the size of those Jordan blocks will be the integer scaling from this smaller imaginary positive root to the larger one in question. The representation space will therefore be determined by three numbers, $n$ the number of times the generator appears in the smallest imaginary positive root, $k$ the size of the Jordan block, which is also the aforementioned scaling, and $r$ the rank of the vector space $x$. We may assume that $r\leq nk$, as otherwise the map from the vector space associated with $x$ to that associated with $j$ would have a nontrivial kernel, which could be amputated from $x$ in a direct sum with no loss of information. Thus, the representation space is equivalent to the set of maps from a space of dimension $r$ to a space of dimension $nk$. More interesting (and the reason why we write it as a modified Jordan quiver) is that once we have fixed that all the vector spaces from the original affine quiver are in the form discussed in the previous discussion, the remaining freedom we have in choosing a basis for the new quiver is given by acting with any invertible $r\times r$ matrix on the right and a matrix which acts on subspaces corresponding to Jordan blocks as $k\times k$ upper triangular Toeplitz matrices on the left. By this observation, it is manifest that any such representation is described by a set of $n$ distinct $k\times r$ matrices, up to the action of $r\times r$ invertibles on the right and $k\times k$ upper triangular Toeplitz matrices on the left. Assuming that the $r\times k$ matrices chosen collectively form a generic $nk\times r$ matrix before that equivalence is made, the first $r$-columns of it may be made into the $r\times r$ identity matrix by fixing the $r\times r$ matrix freedom. A choice of action by upper triangular Toeplitz matrices can be made to eliminate at most $k-1$ remaining degrees of freedom via row operations. Thus the dimension of the quotient space is given by $\max(rkn -r^2 -k-1,0)$, which is linear in each variable (keeping in mind the upper bound on $r$), which is commensurate with an exponential number of options (in this analogy the ``base" of the exponentiation would be $\C$).

If we examine the tree like representations, the representation restricted to the affine part of the quiver must also be tree-like (although it need not be connected, but this imposes additional conditions on how the $x$-vertices are connected in the graphical notation). In the graphical calculus, tree-likeness requires that each $x$-vertex not be attached twice to linked Jordan block subspaces. If we assume that the affine representation \textit{is} indecomposible, then the number of possible distinct trees is the number of $j$-vertices ($nk$) choose the number of $x$-vertices $r$. Meaning, the number of total trees with at most $nk$ $x$-vertices is $2^{nk}$, which gives the aforementioned exponential growth again, and recovers the graphical representations from the Kac-Moody algebra case if the underlying affine algebra is $A$ or $D$ type. If it is one of the $E$-types, then it becomes a question of whether each appropriate vertex in each iterate of the repeatable \textit{quiver representation} pattern is marked by adjoining an $x$ or not.

In the case where the Affine quiver representations are not indecomposible themselves, the number of tree-like representations is equivalent to the number of solutions to the following problem:
\begin{prob}
For each connected Jordan block subspace, create a vertex labelled $a$. For each for each $j$ vertex corresponding to a basis element of a Jordan block subspace, number those vertices $j_i$ for $i$ ranging between one and the total number, and connect it to the corresponding $a$ vertex. Finally, for each integer between $1$ and $r$, create a vertex labeled $c$. How many distinct trees can be made by connecting $j$ and $c$ vertices?
\end{prob}

While a closed form solution to this problem is not known to the author, the exponential estimate given by assuming the underlying affine representations are indecomposible does still show an exponential growth rate. This allows us to state the following theorem:
\begin{thm}
   Let $G$ be a simply laced graph. A quiver $Q$ constructed by orienting it will have a number of sub-generic tree-like representations which is
    
    \begin{enumerate}
       \item Finite, if the Kac-Moody algebra associated to the graph is simple (in this case a $1-1$ correspondence holds with positive roots)
       \item Infinite but grows as a polynomial function in the dimension vector, in the case that the Kac-Moody algebra associated to the graph is affine
      \item Infinite and grows exponentially, if the Kac-Moody algebra associated to the graph is of indefinite type.
   \end{enumerate}
\end{thm}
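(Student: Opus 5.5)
The plan is to reduce the theorem to the order $\leq_c$ and Proposition~\ref{prop:affinesareasplit}. For a simply laced graph $\leq_c$ is just inclusion of subgraphs, so every connected $G$ falls into exactly one of three cases: it is a proper subgraph of an affine $ADE$ diagram (finite type), it is itself affine, or it properly contains an affine diagram (indefinite). Since a disconnected $G$ has representations that are direct sums over components, I would treat each component separately and then take the worst growth rate. The three items are then proved one case at a time, using the explicit representatives from \S\ref{sec:finitequiver}, \S\ref{sec:affinequiver} and \S\ref{sec:binaryquiver}.

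\emph{Finite case.} By Gabriel's theorem there are only finitely many indecomposables, one for each positive root. A tree-like diagram has one basis vertex per unit of dimension, so the dimension vector bounds its size. I would then check that the representatives of \S\ref{sec:finitequiver} are all tree-like:
\begin{itemize}
\item for the $A_n$ and $D_n$ types this is immediate from the path and fragment descriptions there;
\item for the $E$ types I would apply Lemma~\ref{lem:treelike} to the generic normal forms given for the highest roots, and handle the remaining roots through reflection functors, which should preserve tree-likeness up to a choice of basis.
\end{itemize}
By Lemma~\ref{lem:treelike}, a tree-like indecomposable with a given dimension vector is determined by its tree, so the bijection with positive roots follows.

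\emph{Affine case.} I would use Proposition~\ref{thm:affineloop}. For a real root the indecomposable is unique and rigid, and the same tree argument applies. For an imaginary root $nr$, the indecomposables are the one-loop diagrams with parameter $\lambda$. Sub-generic tree-like representatives come from setting $\lambda=0$, and they are determined by the following data:
\begin{itemize}
\item which edge of the loop is cut, giving boundedly many choices, at most the number of edges in the loop for $r$;
\item how the $n$ copies of the tree are chained, as in the construction preceding \S\ref{sec:binaryquiver}.
\end{itemize}
Counting these choices gives a polynomial bound in $n$. Summing over dimension vectors of total size at most $N$ then gives polynomial growth. The count is nonzero for every $n$, so it is infinite.

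\emph{Indefinite case.} Choose an affine subdiagram $A\subset G$ and a vertex $x\notin A$ adjacent to some vertex $j$ of $A$. Restricting to $A\cup\{x\}$ only lowers the count, because extending by zero preserves both indecomposability and tree-likeness, so it suffices to treat $A\cup\{x\}$. Here I would use the modified Jordan quiver of \S\ref{sec:binaryquiver}. Fix an indecomposable tree-like affine representation of dimension $kr$; it has $nk$ basis vertices of type $j$. Attaching $x$-vertices to any subset of these, with at most one attachment per Jordan chain so that no loop is created, gives pairwise non-isomorphic tree-like representations. These should be distinguishable by the positions of the $x$-vertices relative to the Jordan filtration, which is the Toeplitz-invariant data. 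Their number is at least $2^{k}$ for dimension vectors of size $O(k)$, which is exponential.

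\emph{Main obstacle.} The hardest step is showing that these $2^k$ attachments really are indecomposable and pairwise non-isomorphic, rather than equivalent under a change of basis. Connectedness of the diagram alone does not imply indecomposability, because the Proposition in \S\ref{sec:Background} only gives the implication disconnected $\Rightarrow$ decomposable. My first attempt would be to use the Toeplitz normal form: the $x$-attachment positions, read as heights in the Jordan chain, should be an invariant. Indecomposability would then follow by checking that the endomorphism ring is local, which I would argue from the nilpotent Jordan block with $\lambda=0$ being indecomposable as a $\C[t]$-module.
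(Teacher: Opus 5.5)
Your proposal follows the paper's own argument essentially step for step:
\begin{itemize}
\item the reduction through $\leq_c$ and Proposition~\ref{prop:affinesareasplit} to finite, affine, or affine-plus-one-vertex diagrams;
\item Gabriel's explicit tree-shaped representatives in finite type (your reflection-functor step for the non-highest $E$ roots is an unproven claim, but it goes beyond what the paper itself checks);
\item cutting the single $\lambda$-loop at $\lambda=0$ and chaining $k$ copies in affine type;
\item attaching $x$-vertices as leaves, one per $j$-vertex, to arbitrary subsets of the $nk$ $j$-vertices of the modified Jordan quiver, giving $2^{nk}$ trees.
\end{itemize}
The one real addition is the obstacle you flag, which the paper passes over by simply counting distinct trees, and your sketch does close it: distinct $x$-vertices hit distinct $j$-vertices, so $V_x\to V_j$ is injective and restriction embeds the endomorphism ring of the extended representation into the local ring $\mathrm{End}(M)\cong\C[t]/(t^k)$ of the $\lambda=0$ affine representation $M$; moreover $\mathrm{Aut}(M)$ acts on each Jordan chain at $j$ by an invertible upper triangular Toeplitz matrix, so the per-chain heights of the attachment points are isomorphism invariants.
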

\newpage
\section{Conclusion}\label{sec:conc}
In this paper we have generated diagrams following two procedures:
\begin{itemize}
    \item For roots of Kac-Moody algebras: we add successive applications of simple roots until the process terminates or repeats.
    \item For quiver representations: we assume generic sizes of all definable subspaces and work from there to pick a basis which is maximally rigid, ideally then finding the generic indecomposable representation and from there relaxing the constraint to pick out the tree like representations.
\end{itemize}
The data here illustrate the Gabriel-Kac theorem, and indeed the root space multiplicity part of Kac's conjectures, by putting them in a common combinatorial language.

Indeed, in the case of quiver representations at indivisible imaginary roots, which is the case for which Crawley-Bovey's argument using deformed preprojective algebras \cite{Crawley-Boevey_2001} works, the combinatorial approach we illustrate gives an easy illustration for how to obtain a representation for multiples of that root:
\begin{enumerate}
    \item Start by finding the generic indecomposable for an indivisible imaginary root. It will have at least one loop in its minimal loop representation
    \item For each loop, concentrate all parameters on a single edge by scaling basis elements
    \item Pick one edge which still has a parameter.
    \item Replace all vertices with vector spaces $\C^n$
    \item Replace all other edges with matrices $Id$
    \item Replace the selected edge with a Jordan block $J_\lambda$
\end{enumerate}
the resultant representation is indecomposable and has the desired dimensions.

If a representation has $n$-loops with parameters which may each be freely chosen, which we might think of as being in analogy with independent loops of $\pi_1$ of a graph, or independent features in the $1$-homology groups, that representation is naturally a member of an $n$-dimensional family of representations. In particular, if those representations are all absolutely indecomposable, then they contribute a term of $(q-1)^n$ to the count of absolutely indecomposable representations over $\mathbb{F}_q$, albeit with some redundancy between such families, and where setting any of those parameters to zero is taken to relegate it to a smaller family. In particular, in this interpretation it is entirely clear that the tree-like representations are those which contribute to the constant term, and thus correspond to the root multiplicity. It is clear from the accounting done in the previous six sections that the tree-like representations obey the same asymptotic growth rate trichotomy as the root multiplicities, so may be an approximate algebraic model for them.

\subsection{Future Goals}
The combinatorial structures presented here illustrate a part of the theory which, while deep, is relatively primitive compared to more modern developments, including the constructions of the representation theory of Kac-Moody algebras using Nakajima's quiver varieties \cite{NakajimaKM}\cite{Hausel2010}, and constructions of enveloping algebras using Hall algebras of quiver representation categories \cite{Bridgeland2013}. To the author's knowledge, neither these more advanced results nor Kac's conjectures have been approached combinatorially using just the Dynkin diagrams before, and it would be of interest to see what comes of that line of inquiry. In both the Kac-Moody algebra case and the quiver representation case, we have managed to produce a binary coding argument, but have not produced a computationally tractable combinatorial model for either. The long-term goal of this project is to produce such a model, and pursuing these other lines of inquiry may help in doing so.

\subsection{Acknowledgments}
The author would like to thank John Baez for advice on how to present the material, his advisor Adrian Ocneanu for guidence, and the graduate student body of Penn State University and the organizers of the 2025 Symmetry, Discrete Geometry, and Mathematical Physics conference in Scalea, Italy for the opportunity to present these ideas to an audience.

\newpage
\addcontentsline{toc}{section}{References}
\printbibliography
\newpage
\appendix
\section*{Appendices}
\addcontentsline{toc}{section}{Appendices}
\section{Dynkin Diagrams, Cartan Matrices, and the Serre Relations}\label{sec:farbackground}
 This section is dedicated to the review of results which are used in the development of our method, but which would distract from the central narrative.

 \begin{dfn}\label{def:cart}
     A \textbf{genralized Cartan matrix} is an integral square matrix $A = (A_{ij})$ such that 
    \begin{itemize}
        \item $A_{ii} = 2$
        \item $A_{ij} \leq 0$ if $i\neq j$
        \item $A_{ij} = 0 \longleftrightarrow A_{ji} = 0$
    \end{itemize}
 \end{dfn}

Given such a matrix $A_{ij}$, we may define a Lie algebra with generators $e_i$, $f_i$, and $h_i$ for each row (we must also include a number of additional $k_n$ to describe the rank-deficiency of $A$). We start with the free Lie algebra generated by these, which does not include any of the information of the matrix itself besides its size. The remainder of the information is used to create relations:
\begin{dfn}\label{def:serre}
    The \textbf{Serre relations} are the following
     \begin{enumerate}
        \item $ad_{e_i}^{1-A_{ij}} e_j = 0$
        \item $ad_{f_i}^{1-A_{ij}}f_j = 0$
        \item $ad_{h_i}e_j = A_{ij}e_j$
        \item $ad_{h_i} f_j = - A_{ij}f_j$
        \item $[e_i, f_j] = \delta_{ij} h_i$
        \item $[h_i,h_j]=[h_i,k_n] = [k_n,k_m]=0$
    \end{enumerate}
\end{dfn}
$e$ and $f$ generators are also assumed to be eigenvectors for the $k_n$, in such a way as to ensure the independence of the eigenvalues as elements of the dual space of $ \mathcal{h}=\mathrm{span}\{h_1,h_2...,h_n, k_1... k_m\}$. Common eigenvalues of $h$ in the adjoint representation are called \textbf{roots}.
\begin{dfn}
    An algebra defined using these generators and relations is called the \textbf{Kac-Moody} algebra of $A$
\end{dfn}

The familiar Lie algebras arise in this way, as do the more exotic Affine Lie algebras used in conformal field theory. In fact, the familiar Lie algebras correspond to some sparse matrices with small entries. The diagonal contains no information, so we may rid ourselves of it by taking $2-A$, which will be a sparse, integral matrix with zero diagonal. Such a thing looks very much like the adjacency matrix of a graph (which here means a directed multigraph with no self-loops). If we take that point of view seriously, we get the notion of Dynkin diagrams.

\begin{dfn}
    A \textbf{Dynkin diagram} is a graph generated from a Cartan matrix in the following way:
     \begin{enumerate}
        \item Give a node for each row
        \item If $A_{ij} = A_{ji} = 0$, no edge between them
        \item If $A_{ij} = A_{ji} = -n$, $n$ undirected edges between them,
        \item If $A_{ij} = -n < A_{ji} = -1$, $n$ edges between them pointing from $j$ to $i$
        \item If $A_{ij}, A_{ji} < -1$, put an edge labled with $(A_{ij}, A_{ji})$ when read from $i$ to $j$

    \end{enumerate}
\end{dfn}
    
We differ from some authors, for instance Saclioglu \cite{hyperbolic} in points $3$ and $5$. These points are necessary, however, for visual clarity when more complicated algebras are under discussion. 

 We are going to use them to explicitly construct the elements of Kac-Moody algebras.

\begin{dfn}
    A generalized Cartan matrix $A$ is \textbf{symmetrizable} if there exists a diagonal matrix $D$ such that $DA$ is symmetric.
\end{dfn}

Symmetrizability is important because the roots generate a free abelian subgroup of $\C^n$ called the \textbf{root lattice}, and symmetrizability allows us to put a natural symmetric bilinear product on that group, turning it into a lattice in the sense of number theory. It allows us to work with the root lattice as a set of coordinates and reason about them geometrically, to wit Kac-Moody algebras are generated by three types of things: 
    \begin{itemize}
        \item $h_i$'s and $k_m$'s, which have joint eigenvalues that act like coordinate functions
        \item $e_i$'s, which act like raising operators for these values
        \item $f_i$'s, which act like lowering operators
    \end{itemize}
If we want to work with these generalized coordinates in any meaningful way, we want them to behave like coordinates, and therefore want roots to have defined lengths (or at least sensible relative lengths). Symmetrizability lets us do this. Many properties of Kac-Moody algebras, such as the existence of a Bernstein-Gelfand-Gelfand resolution, are true independent of symmetrizability, but require much harder proofs \cite{Kumar1990}.

\begin{dfn}
A diagram is called \textbf{simply laced} if all of its edges are single edges, or equivalently all non-diagonal entries in the associated Cartan matrix are $0$ or $-1$.
\end{dfn}

There is a well known trick due to Kac which allows for the construction of non simply laced graphs from simply laced ones with built in symmetries. The summary due to John Stembridge \cite{Stembridge_2008} is very helpful to understand this trick in more detail. On the level of algebras, this trick amounts to the realization that given generators $x$ and mutually commuting $e_1,...,e_n$, such that 
\begin{itemize}
    \item $Ad_{e_i}^{k}(x) =0$
    \item $Ad_{x}^{j}(e_i) = 0$    
\end{itemize}
and that the Lie algebra is freely generated besides these constraints, for the sum $s = e_1 + e_2 +... +e_n$, we have
\begin{itemize}
    \item $Ad_{s}^{nk}(x) = 0$
    \item $Ad_x^j(s)=0$
\end{itemize}
and the Lie algebra jointly generated between those two elements is otherwise free. This allows us to consolidate edges in the Dynkin diagrams using automorphisms of the graph itself, provided that the automorphism under use does not permute adjacent vertices of the graph. This trick allows some computations for non simply-laced algebras to be carried out by viewing them as subalgebras of simply laced ones. Beyond the simple cases, it is not possible in general to completely render a graph simply laced by using Kac's trick in reverse. It is, however, possible to do any particular root space calculation using a bounded number of generators via such an argument. This phenomenon is similar in essence to the construction of the universal cover in algebraic topology, where it is not always true that the universal cover is finite, any particular loop can be undone by a finite cover.
One of the nice things that this geometric information lets you do is state a trichotomy theorem, which appears in Kac's book \cite{Kac_2003}:
\begin{thm}
    \begin{enumerate}
        \item If the matrix $DA$ is positive definite, the algebra of $A$ is a finite-dimensional, semisimple lie algebra
        \item If the matrix $DA$ is positive semidefinite, the algebra is infinite dimensional, but dimension grows polynomially in the number of generators used
        \item If the matrix $DA$ is indefinite, the algebra grows exponentially
    \end{enumerate}
\end{thm}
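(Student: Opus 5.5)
The plan is to reduce to a connected (indecomposable) Dynkin diagram, since the algebra of a direct sum of generalized Cartan matrices is the direct sum of the algebras. Dimension growth is measured by the root-space decomposition $E=\bigoplus_{\alpha>0}E_\alpha$, graded by height (the number of generators used in a pattern). Two facts hold uniformly in all three cases. First, $E$ is a quotient of the free Lie algebra on the $e_i$, so the degree-$N$ part has dimension at most $n^N$; exponential growth is therefore the worst case. Second, Proposition~\ref{prop:orderisalgebraic} lets nonzero patterns for a diagram be transported to any diagram above it in $\leq_c$. I will then treat the three cases separately.

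\emph{Positive definite case.} Let $(\cdot,\cdot)$ be the form given by $DA$. I will use the standard invariance fact that every root is either real (a Weyl-group image of a simple root, with one-dimensional root space and norm $(\alpha_i,\alpha_i)$ for some $i$) or imaginary (of norm $\le 0$). When $DA$ is positive definite there are no nonzero vectors of norm $\le 0$, so every root is real. Every root then lies in the finite set of lattice points of norm at most $\max_i(\alpha_i,\alpha_i)$, which is finite because the form is positive definite. Hence $E$, and with it the whole algebra $F\oplus\mathfrak h\oplus E$, is finite dimensional.

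For semisimplicity I will argue as follows. Any ideal $I$ is $\mathfrak h$-graded. If $I$ meets some root space, then applying $\ad_{e_i}$ and $\ad_{f_i}$ moves it down to some $h_i$. Connectedness of the diagram together with $[h_i,e_j]=A_{ij}e_j$ then forces $I$ to contain every generator. If instead $I\subset\mathfrak h$, then $I$ is central, and the center is $0$ because $A$ is nondegenerate. As a consistency check, the termination of the pattern search in \S\ref{sec:Simple} exhibits the highest roots explicitly.

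\emph{Positive semidefinite case.} Here the radical of the form is spanned by a single primitive $\delta$; this is the standard affine fact, and by Proposition~\ref{prop:affinesareasplit} every proper subdiagram is of finite type. The imaginary roots are therefore exactly the multiples $m\delta$. The real roots are $\beta+m\delta$ with $\beta$ in the finite root system of a finite-type subdiagram (delete the affine node). Imaginary multiplicities are bounded by the rank, as the loop-algebra model, or equivalently the iterable patterns of \S\ref{sec:Affine}, shows. It follows that the number of roots of height $N$ is bounded by a constant, so $\dim E_{\le N}=O(N)$. This is polynomial growth, and growth genuinely occurs because $m\delta$ is a root for every $m$.

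\emph{Indefinite case.} By Proposition~\ref{prop:affinesareasplit}, a connected indefinite $A$ satisfies $A'\leq_c A$ for some affine $A'$, with $A'\neq A$. I will use the binary coding of \S\ref{sec:WildBinary}: transport the iterable affine pattern $R$ to $A$ via Proposition~\ref{prop:orderisalgebraic}, and build $S_0,S_1$ from it. The main obstacle is proving that the $2^m$ words $S_{i_m}\cdots S_{i_1}a$ are linearly independent. Words with different numbers of $1$'s lie in different root spaces, but words of equal weight do not.

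My first attempt will be to replace the coding by a freeness statement. Take $u\in E_{\gamma}$ built from $R$ with $\gamma$ close to a multiple of $\delta$, and $v\in E_{\gamma'}$ built using the extra vertex or edge, arranged so that $(\gamma,\gamma')<0$ and both norms are $\le 0$. I will then show that $u,v$ generate a free Lie subalgebra, via the standard lemma that root vectors of pairwise non-positive-pairing imaginary weights satisfy no Serre-type relations. This mirrors how $\ad_{e_i}^{1-A_{ij}}$ is the only relation in the definition of $E$. A free Lie algebra on two generators has degree-$m$ dimension asymptotic to $2^m/m$. Since each degree-$m$ word uses at most $Cm$ generators, $E$ has at least roughly $2^{N/C}/N$ dimensions in height $\le N$, which is exponential and matches the free upper bound.
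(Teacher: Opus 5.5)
\textbf{Cases 1 and 2.} Your definite and semidefinite cases are acceptable as sketches. They rest on standard structure theory: real roots form $W\Pi$, imaginary roots have norm $\le 0$, and in the affine case $\Delta^{\mathrm{im}}_+=\{m\delta\}$ with $\mathrm{mult}(m\delta)\le \ell$. This is a different route from the paper, which argues through termination of the pattern search over the classified finite diagrams and through the repeating affine patterns.

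One small correction. For $A^{(2)}_{2\ell}$, not every real root has the form $\beta+m\delta$ with $\beta$ a root of the finite subdiagram; Kac's list includes roots $\tfrac12(\alpha+(2n-1)\delta)$. What you actually need is still true: real roots have bounded norm, so they fall into finitely many classes modulo $\mathbb{Z}\delta$, because the induced form on $Q/\mathbb{Z}\delta$ is positive definite.

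\textbf{Case 3 has a genuine gap.} Everything there rests on your ``standard lemma'': nonzero $u\in E_\gamma$ and $v\in E_{\gamma'}$ with $(\gamma,\gamma)\le 0$, $(\gamma',\gamma')\le 0$ and $(\gamma,\gamma')<0$ generate a free Lie algebra. This is not a standard lemma, and you give no argument for it.
\begin{itemize}
\item The nearby true statement is about Borcherds algebras: simple root vectors attached to imaginary simple roots satisfy no Serre relations. That fact is built into the Borcherds presentation. The subalgebra results one can actually quote for Kac--Moody algebras concern real root vectors, for example sets of real roots whose pairwise differences are not roots.
\item Your $u$ and $v$ are complicated Lie polynomials in the $e_i$. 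Whether some Lie word in them vanishes is decided by consequences of $\mathrm{ad}_{e_i}^{1-A_{ij}}e_j=0$ together with the Jacobi identity. Nothing in your argument explains how the signs of three inner products would rule such consequences out.
\item The analogy with ``the Serre relations are the only relations'' does not transfer, because $u,v$ are not the generators of any presentation you have computed.
\item You also never construct $\gamma,\gamma'$ and nonzero $u,v$ with the required properties. ``$\gamma$ close to a multiple of $\delta$'' is not a construction.
\end{itemize}

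You have correctly located the step the paper glosses over. The paper simply asserts that the codes $S_{i_m}\cdots S_{i_1}a$ are linearly independent for a given length, and, as you note, root-space separation does not give this. But you have replaced that unproved independence claim with an unproved freeness claim of the same difficulty. To close it you need an actual independence mechanism: for instance, a specific rank-$2$ subalgebra whose freeness you prove, or a direct lower bound on root multiplicities along a negative-norm direction. Without that, part 3 establishes only the upper bound $n^N$, not exponential growth.
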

We narrow down our enumeration of basis elements by using the following basic facts:
\begin{prop}
Every Kac-Moody algebra has a triangular decomposition $F\oplus h\oplus E$ where \begin{itemize}
            \item $E$ contains the $e_i$s and their ad-polynomials
            \item $F$ contains the $f_j$s and their ad-polynomials
            \item $h$ is as before
        \end{itemize}
\end{prop}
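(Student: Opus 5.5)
The proposition asks for the triangular decomposition $\mathfrak{g}(A)=F\oplus h\oplus E$. I would prove it the standard way, following Kac's book \cite{Kac_2003}: first build an auxiliary algebra $\tilde{\mathfrak{g}}(A)$ using only the relations involving $h$ (items 3--6 of Definition \ref{def:serre}), then pass to the quotient by the remaining Serre relations.

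\textbf{Step 1: the auxiliary algebra.} Let $\tilde{E}$ and $\tilde{F}$ be the free Lie algebras on the $e_i$ and on the $f_i$. I would show that $\tilde{\mathfrak{g}}(A)=\tilde{F}\oplus h\oplus\tilde{E}$ as vector spaces. This is done by constructing an explicit representation of $\tilde{\mathfrak{g}}(A)$ on the tensor algebra $T(V)$, where $V=\mathrm{span}\{f_i\}$ and $\lambda\in h^*$ is fixed:
\begin{itemize}
\item $f_i$ acts by left multiplication;
\item $h$ acts diagonally, with $\lambda$ shifted by minus the sum of the roots of the letters in a word;
\item $e_i$ acts by the inductive rule $e_i(f_j\otimes w)=f_j\otimes e_i(w)+\delta_{ij}h_i(w)$, starting from $e_i(1)=0$.
\end{itemize}
One then checks that the defining relations hold; the key one is $[e_i,f_j]=\delta_{ij}h_i$.

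\textbf{Step 2: the decomposition.} Using this module, I would show three things:
\begin{itemize}
\item $h$ injects into $\tilde{\mathfrak{g}}(A)$, by letting $\lambda$ vary;
\item $\tilde{F}$ maps isomorphically onto its image, since $\tilde{F}\to T(V)$ via $u\mapsto u\cdot 1$ is the standard embedding of a free Lie algebra into its enveloping tensor algebra;
\item $\tilde{F}+h+\tilde{E}$ is closed under bracketing with the generators, and is therefore a subalgebra equal to everything. This is an induction on bracket length using $[e_i,f_j]=\delta_{ij}h_i$ and the action of $h$.
\end{itemize}
The sum is direct because the three pieces lie in distinct weight spaces: negative, zero and positive root lattice degrees, respectively. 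The $e$-side follows by the Chevalley involution $e_i\leftrightarrow -f_i$, $h\mapsto -h$.

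\textbf{Step 3: the Serre quotient.} Let $\mathfrak{r}_+$ be the ideal of $\tilde{E}$ generated by the $ad_{e_i}^{1-A_{ij}}e_j$, and $\mathfrak{r}_-$ the analogous ideal of $\tilde{F}$. The main obstacle is to show that $\mathfrak{r}_+$ is an ideal of the whole $\tilde{\mathfrak{g}}(A)$, not merely of $\tilde{E}$. Concretely, one needs $[f_k,ad_{e_i}^{1-A_{ij}}e_j]=0$ in $\tilde{\mathfrak{g}}(A)$, and the cases are:
\begin{itemize}
\item $k\neq i,j$: immediate;
\item $k=j$: gives $ad_{e_i}^{1-A_{ij}}$ applied to $h_j$, which is $0$ once $1-A_{ij}\ge 2$, and when $A_{ij}=0$ also uses $A_{ji}=0$;
\item $k=i$: reduces to the $\mathfrak{sl}_2$ identity $[f,e^{m}\cdot v]=-m(\langle h,v\rangle+m-1)e^{m-1}\cdot v$ with $m=1-A_{ij}$ and weight $A_{ij}$, which vanishes exactly because $m-1=-A_{ij}$.
\end{itemize}
Once this holds, $\mathfrak{r}_+\oplus\mathfrak{r}_-$ is an ideal meeting $h$ trivially. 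Quotienting then gives $\mathfrak{g}(A)=F\oplus h\oplus E$ with $E=\tilde{E}/\mathfrak{r}_+$ spanned by $ad$-polynomials in the $e_i$ and $F=\tilde{F}/\mathfrak{r}_-$ spanned by those in the $f_i$. The extra $k_n$ only enlarge $h$, so the argument is unchanged.
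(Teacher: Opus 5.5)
Your proposal is correct, and it proves substantially more than the paper's own argument does. The paper's proof is one sentence: bracketing an $f_i$ with an ad-polynomial in the $e$'s (or an $e_i$ with one in the $f$'s) lowers degree or lands in $h$. So $F+h+E$ is stable under the generators and exhausts the algebra. That is exactly your third bullet in Step 2, and it only gives $\mathfrak g=F+h+E$ as a \emph{sum}. It does not show that the sum is direct, that $h$ (including the $k_n$) survives in the quotient, or that the Serre relations do not force a collapse mixing the positive and negative parts.

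Your route supplies all of this. It uses the auxiliary algebra and its module $T(V)$ from Chapter 1 of \cite{Kac_2003}, the root-lattice grading, and the computation $[f_k,\mathrm{ad}_{e_i}^{1-A_{ij}}e_j]=0$. It applies directly to the paper's Serre-relation definition, using only $A_{ii}=2$, $A_{ij}\le 0$ and $A_{ij}=0\Leftrightarrow A_{ji}=0$, with no symmetrizability needed.

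It also gives something the paper later uses without proof. Your Step 3 identifies $E$ with $\tilde E/\mathfrak r_+$, the free Lie algebra on the $e_i$ modulo only the Serre relations. The paper invokes exactly this when it studies the growth of $E$ by itself, and when Proposition \ref{prop:orderisalgebraic} asserts that $E_A$ is a quotient of $E_B$. Your Chevalley involution likewise yields the paper's next proposition, $E\cong F$. The paper's sketch buys brevity, and spanning is all its candidate-pattern upper bounds need.

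The one step you leave implicit is the short induction from $[f_k,u]=0$ (for Serre generators $u$) to $\mathrm{ad}\,f_k$-stability of $\mathfrak r_+$, which is spanned by the $\mathrm{ad}_{e_{k_1}}\cdots\mathrm{ad}_{e_{k_s}}u$. It follows from $[f_k,e_l]=-\delta_{kl}h_k$ and the fact that $\mathfrak r_+$ is spanned by weight vectors, so it is routine.
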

\begin{proof}
    Applying an $f_i$ (resp $e_i$) to $E$ (resp $F$) gives lower degree ad-polynomials  or elements of $h$
\end{proof}

\begin{prop}
    $E\cong F$ as Lie algebras
\end{prop}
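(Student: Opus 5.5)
We need to prove $E\cong F$ as Lie algebras. The plan is to use the Chevalley involution. Define $\omega$ on the generators of the free Lie algebra by $\omega(e_i)=-f_i$, $\omega(f_i)=-e_i$, $\omega(h_i)=-h_i$, and $\omega(k_n)=-k_n$. Since the free Lie algebra is free, $\omega$ extends uniquely to an automorphism of it. Because $\omega^2$ is the identity on generators, $\omega$ is an involution.

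The first step is to check that $\omega$ preserves the ideal generated by the Serre relations of Definition \ref{def:serre}, so that it descends to the Kac-Moody algebra.

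\begin{itemize}
\item Relation 1 maps to relation 2 up to sign: $\omega(\ad_{e_i}^{1-A_{ij}}e_j)=(-1)^{2-A_{ij}}\ad_{f_i}^{1-A_{ij}}f_j$. Symmetrically, relation 2 maps to relation 1.
\item For relation 3, $\omega([h_i,e_j]-A_{ij}e_j)=[h_i,f_j]+A_{ij}f_j$, which is exactly relation 4. Relation 4 maps back to relation 3 in the same way.
\item For relation 5, $\omega([e_i,f_j]-\delta_{ij}h_i)=[f_i,e_j]+\delta_{ij}h_i=-([e_j,f_i]-\delta_{ij}h_i)$, again a relation.
\item Relation 6 is clearly preserved.
\item The conditions involving the $k_n$ are also preserved. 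The $e_i$ are $k_n$-eigenvectors and the $f_i$ carry the negated eigenvalues, so the sign flip $k_n\mapsto -k_n$ exchanges them consistently.
\end{itemize}

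Hence $\omega$ induces an involutive automorphism of the Kac-Moody algebra.

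The second step is to show that $\omega$ maps $E$ onto $F$. By the triangular decomposition proposition, $E$ is spanned by the $\ad$-polynomials in the $e_i$, and $F$ by those in the $f_i$. Since $\omega$ is a Lie algebra homomorphism, it sends an iterated bracket $[e_{i_n},[\dots,e_{i_1}]]$ to $(-1)^n[f_{i_n},[\dots,f_{i_1}]]$. So $\omega(E)\subseteq F$, and by the same computation $\omega(F)\subseteq E$. Because $\omega$ is an involution, its restriction $\omega|_E:E\to F$ has inverse $\omega|_F$, so it is a Lie algebra isomorphism.

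Should one prefer a sign-free isomorphism, the map $e_{i}\mapsto f_i$ on iterated brackets differs from $\omega|_E$ by the grading automorphism $x\mapsto(-1)^{\deg x}x$ of $F$. That map is an automorphism because $F$ is graded by the number of generators.

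The main obstacle is the descent step: confirming that the ideal of relations is stable under $\omega$, and in particular the bookkeeping of signs and the $k_n$-eigenvalue condition. I would treat that condition by noting that it is preserved under negating both the $k_n$ and the weights. Everything else is formal.
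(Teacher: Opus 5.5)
The paper states this proposition without proof (it appears in the appendix as a standard fact), so there is no argument of the paper's to match. Your proof via the Chevalley involution is correct, and it is the standard one from Kac's book \cite{Kac_2003}.

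Your checks are sound on each point. The signs for relations 1--5 are right. Descent is justified because $\omega$ is an automorphism of the free Lie algebra sending each defining relation to $\pm$ a defining relation. Finally, $\omega(E)$ is the subalgebra generated by the $-f_i$, which is $F$.

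The only route the paper hints at comes in the next sentence, where $E$ is described as the Lie algebra on the $e_i$ subject only to $\ad_{e_i}^{1-A_{ij}}e_j=0$. Granting that, $F$ has the same presentation in the $f_i$, and $e_i\mapsto f_i$ is an isomorphism at once. Your route assumes less. It never needs the fact that the Serre relations are the \emph{only} relations among the $e_i$ inside the Kac--Moody algebra. That fact requires a real argument: freeness of the positive part of the auxiliary algebra, plus the vanishing of $[f_k,\ad_{e_i}^{1-A_{ij}}e_j]$. Your argument also adapts directly to Kac's definition via the maximal ideal meeting the Cartan subalgebra trivially, since $\omega$ carries that ideal to an ideal with the same property. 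The presentation route, on the other hand, gives the sign-free isomorphism $e_i\mapsto f_i$ directly, and it is the form the paper actually uses afterwards.

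One small remark: the paper never says that the $f_i$ carry the negated $k_n$-eigenvalues, but you do not need to assume this. Applying $\ad_{k_n}$ to $[e_i,f_i]=h_i$ and using $[k_n,h_i]=0$ gives $(c+c')h_i=0$, where $c,c'$ are the eigenvalues of $e_i,f_i$. Since $h_i\neq 0$, this forces $c'=-c$.
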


Together, these facts mean that we can study important features, such as growth rate, of the Kac-Moody algebra by only studying $E$. Thus we study the growth rate of $E$, which is a Lie algebra with generators $e_i$ and only the relations $ad_{e_i}^{1-A_{ij}} e_j = 0$ for each pair of generators. Because this is a finite presentation of the Lie Algebra, it is possible to tell algorithmically if there is a linear dependence between a finite collection of $\ad$-polynomials. The analogous problem is not solvable for groups (see Novikov \cite{Novikov1958-NOVAUO}). In this case, the fact that the relations do not create dependencies between $\ad$-polynomials with different degree (different numbers of each generator) means the search space is finite, and thus that the dimension of each root space (the \textbf{root space multiplicity}). The state of the art estimates for counting root space multiplicities can be found in a paper by Carbone et al \cite{Carbone} from 2024. While it gives estimates of root space multiplicities along different directions, which are much more fine-grained data than overall growth rate, even that paper does not give closed form formulae; the difficulty of that problem was noted as far back as Borcherds' 1986 paper on vertex algebras \cite{Borcherds}. It is known, due to Hausel's proof of Kac's conjectures \cite{Hausel2010}, that this is equivalent to finding the constant term (in $q$) in the polynomial counting absolutely indecomposable representations of the quiver over $\mathbb{F}_q$, but this problem is not a priori any easier than simply counting representatives and redundancies.

\section{Facts about the Affine Algebras}\label{sec:affinefacts}
The \textit{Affine} Kac-Moody algebras are a restricted subclass of Kac-Moody algebras which are ``one step beyond" the simple Lie algebras. They are the most well studied class of infinite dimensional Kac-Moody algebras and the most useful in applications, to the extent that in the physics literature they are sometimes simply referred to as ``the Kac-Moody algebras." 
\begin{fact}
Given a Kac-Moody algebra $\mathfrak{g}$, the following are equivalent:
\begin{itemize}
    \item $\mathfrak{g}$ is an affine algebra
    \item The Cartan matrix $A$ of $\mathfrak{g}$ is symmetrizable and $DA$ is positive semidefinite.
    \item $\mathfrak{g}$ is infinite dimensional and has polynomial growth
    \item Each irreducible direct summand of $\mathfrak{g}$ can be constructed as a central extension of a twisted loop algebra over a simple Lie algebra (a process appropriately called ``affinization")
    \item $\mathfrak{g}$ arises as the current algebra in a Wess-Zumino-Witten model
\end{itemize}
\end{fact}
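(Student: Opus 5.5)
We will prove the equivalences by the cycle of implications (second bullet) $\Rightarrow$ (fourth) $\Rightarrow$ (third) $\Rightarrow$ (second). The first bullet we take as \emph{defined} by the second, per the usage in \S\ref{sec:Affine}. The fifth bullet we attach to the fourth at the end. Throughout, we reduce to the indecomposable case. A Kac-Moody algebra of a decomposable generalized Cartan matrix is a direct sum of those of its blocks. Positive semidefiniteness, symmetrizability, polynomial growth and the affinization property each hold for the sum iff they hold (or the relevant condition holds) blockwise. This needs the mild convention that ``affine'' allows finite-type summands alongside at least one affine one.

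For (second) $\Rightarrow$ (fourth), we invoke Kac's classification \cite{Kac_2003}. An indecomposable symmetrizable generalized Cartan matrix with $DA$ positive semidefinite but not definite has corank one. It appears in the tables whose diagrams are exactly those listed in \S\ref{sec:Affine} (Figures \ref{fig:twospecial}, \ref{fig:affineE}, \ref{fig:G2Likes}, \ref{fig:F4likes} and the $\tilde{A}_n$ loops). For each diagram, Chapters 7 and 8 of \cite{Kac_2003} realize the algebra in one of two ways. The untwisted types are $\hat{\mathfrak{g}} = \mathfrak{g}\otimes\C[t,t^{-1}]\oplus\C c\oplus\C d$. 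The twisted types $A^{(2)}_{\ast}, D^{(2)}_{\ast}, E_6^{(2)}, D_4^{(3)}$ are the fixed points of a diagram automorphism $\mu$ of order $m$ acting by $t\mapsto e^{2\pi i/m}t$. The identification is made by matching Chevalley generators: we take $e_0 = f_\theta\otimes t$ (or its twisted analogue) and check the Serre relations. The surjection from the Kac-Moody algebra is then an isomorphism because both sides have trivial maximal ideal meeting $\mathfrak h$ trivially. This step is quoted rather than reproved.

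For (fourth) $\Rightarrow$ (third), the loop realization is graded by $\deg t$. Each graded piece has dimension at most $\dim\mathfrak g$, and the principal grading by total generator count is comparable to the $t$-grading up to the constant height of the null root $\delta$. Hence the number of basis vectors using at most $N$ generators is $O(N)$, which is polynomial and infinite. This matches the linear bound from the repeating patterns in \S\ref{sec:Affine}. For (third) $\Rightarrow$ (second), we argue by contradiction via the trichotomy in the appendix. If $A$ were of finite type, the algebra would be finite dimensional. If $A$ is neither finite nor affine type, Proposition \ref{prop:affinesareasplit} gives an affine $B\leq_c A$. The binary coding of \S\ref{sec:WildBinary}, made valid in $A$ by Proposition \ref{prop:orderisalgebraic}, then yields at least $2^{k/C}$ independent elements, contradicting polynomial growth. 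So $A$ is of affine type. Symmetrizability and semidefiniteness follow because every matrix in the affine tables is symmetrizable, which is checked directly since each diagram is a tree or the cycle $\tilde A_n$.

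The main obstacle is twofold. First, the linear independence of the binary codes $S_{i_n}\cdots S_{i_1}a$ in \S\ref{sec:WildBinary} is only asserted there. I would establish it by exhibiting distinct root-space weights or by a filtration argument in the free Lie algebra modulo Serre relations; if that fails, I would fall back on Kac's Theorem 11.1 / Corollary 11.15 on exponential growth for indefinite type. Second, the fifth bullet is physical rather than mathematical. I would treat ``current algebra of a WZW model'' as meaning the algebra generated by the Fourier modes of the currents $J^a(z)$ of the WZW model on a compact simple group $G$ at level $k$. Their OPE gives exactly $[J^a_m,J^b_n] = f^{ab}{}_cJ^c_{m+n}+km\,\delta_{m+n,0}\kappa^{ab}$, which together with $L_0$ (playing the role of $d$) is $\hat{\mathfrak g}$. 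Twisted boundary conditions under an outer automorphism give the twisted types, which closes (fourth) $\Leftrightarrow$ (fifth).
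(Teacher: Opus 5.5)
Your overall route is the paper's: you quote Kac's classification and loop realization for the second bullet $\Rightarrow$ the fourth, use the $t$-grading for the fourth $\Rightarrow$ the third, invoke the binary coding of \S\ref{sec:WildBinary} for the converse, and cite physics for WZW. Two of your steps do not work as written, though.

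\textbf{Independence of the binary codes.} Your plan to prove independence of the codes $S_{i_n}\cdots S_{i_1}a$ via distinct root-space weights cannot succeed. In the added-vertex case a code has weight $\mathrm{wt}(a)+n\,\mathrm{wt}(R)+m\,\alpha_x$, where $m$ is the number of $1$'s. In the added-edge case it is $\mathrm{wt}(a)+3n\,\mathrm{wt}(R)+m\,\alpha_t$, with $t$ the target of the new edge. So all $\binom{n}{m}$ codes with $m$ ones lie in one root space, and weights separate only $n+1$ classes, which is polynomial. The exponential count needs linear independence \emph{inside} a single root space. That is exactly what the paper asserts without proof, and your unspecified ``filtration argument'' would have to supply it. As it stands, the third bullet $\Rightarrow$ the second rests entirely on your fallback citation of Kac's exponential-growth result for indefinite type. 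That citation is legitimate, and the paper also defers the first four bullets to Kac, but it means the binary coding is illustration rather than proof.

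\textbf{Reduction to the indecomposable case.} This reduction is not valid. Infinite-dimensionality and ``semidefinite but not definite'' are not blockwise properties. With your convention allowing finite-type summands next to an affine one, the fourth bullet fails. For example, $A=A_1^{(1)}\oplus A_1$ gives $\hat{\mathfrak{sl}}_2\oplus\mathfrak{sl}_2$. This satisfies the second and third bullets, and the first under your convention, but its summand $\mathfrak{sl}_2$ is not a central extension of a loop algebra. Without the convention, the second bullet read literally includes positive definite (finite-type) matrices, which violate the third. The statement is only correct for indecomposable $A$, with the second bullet read as ``positive semidefinite of corank one.'' Impose that as a hypothesis instead of claiming a reduction.
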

The first four points can be found in Kac's book \cite{Kac_2003}, and for the fifth one (which is basically the fourth one, but highlights the relevance of these algebras in physics) should probably consult Di Francesco et al. \cite{CFT}. Given a simple Lie algebra $\mathfrak{a}$, the untwisted loop algebra is $\mathfrak{a}\otimes \C(t)$. Twisting here means that rather than sections of $\mathbb{S}^1\times\mathfrak{a}\times \mathbb{S}^1$, the sections are of the bundle given by $([0,2\pi]\times \mathfrak{a}/\cong_\phi )\rightarrow \mathbb{S}^1$, where $\cong_\phi$ is the equivalence relation identifying
\[(0,a) \cong_\phi (1,\phi(a))\]
for $\phi$ an automorphism of $\mathfrak{a}$. This immediately gives both infinite dimensionality and polynomial growth. Showing that this implies positive semidefiniteness is an exercise in finding a system of simple roots for this algebra. Showing that all positive semi-definite Cartan matrices arise in this way is a proof by exhaustion, and showing that that condition is equivalent to polynomial growth is the point of the argument that stretches from \S\ref{sec:Simple}, through \S\ref{sec:Affine}, to  \S\ref{sec:WildBinary}.

Also manifest in the affinization construction is that the associated algebra has positive roots with positive multiplicity. These are a new development that happens beyond the simple case, and are called \textit{imaginary roots}. In the affine case, the positive imaginary root spaces all have the same dimension (guaranteed by the affinization construction), which is also the dimension of the original Cartan subalgebra. This is why we see a repeating pattern in the diagrams for affine roots: the number of iterations of that pattern is the number of factors of $t$ in the loop algebra expression for that positive imaginary root. 

\nocite{baezdynkin}

\end{document}